\long\def\beginpgfgraphicnamed#1#2\endpgfgraphicnamed{\includegraphics{#1.pdf}}
\newtheorem{theorem}{Theorem}[section]
\newtheorem{lemma}[theorem]{Lemma}
\newtheorem{conjecture}[theorem]{Conjecture}
\newtheorem{definition}[theorem]{Definition}
\newtheorem{remark}[theorem]{Remark}
\numberwithin{equation}{section}
\newcommand{\Real}{{\mathbb{R}}}
\newcommand{\Rd}{ {\Real^d} }
\title[Inequalities for eigenvalues of triangles]{Isoperimetric inequalities for eigenvalues of triangles}
\author{Bart{\l}omiej Siudeja}
\address{Department of Mathematics, Purdue University, West Lafayette, Indiana 47906}
\email{siudeja@math.purdue.edu}
\subjclass[2000]{Primary 35P15}
\keywords{eigenvalues,symmetrization, polarization, variational methods, polynomial inequalities}
\begin{document}

\begin{abstract}
Lower bounds estimates are proved for the first eigenvalue for the Dirichlet Laplacian on arbitrary triangles using various symmetrization techniques.
These results  can viewed as a generalization of P\'olya's isoperimetric bounds. It is also shown that amongst triangles, the equilateral triangle minimizes the spectral gap and (under additional assumption) the ratio of the first two eigenvalues. This last result resembles the Payne-P\'olya-Weinberger conjecture proved by Ashbaugh and Benguria.
\end{abstract}

\maketitle
\section{Introduction}
\noindent
The purpose of this paper is to prove new isoperimetric--type  bounds for the
eigenvalues of the Dirichlet Laplacian on arbitrary triangles. Given a domain $D$ we will use 
``eigenvalue of the domain $D$"  to refer to an eigenvalue of the Dirichlet Laplacian on the domain $D$.
The eigenvalues for a domain form a nondecreasing sequence $\{\lambda_i, j\geq 1\}$ with 
$\lambda_1<\lambda_2$. Throughout the paper we use $A$ for the area of a domain, 
$L$ for its  perimeter, $R$ for its inradius (the supremum of the radii of disks contained in the domain) and $d$ for its diameter.

The problem of finding good bounds for eigenvalues of various domains has been of interest for many years. See for example \cites{PS,Du,AB,Pr} for general bounds, \cites{S2,AF} for result about polygons,
and \cites{F,F2,S} for bounds for triangles. Comprehensive overview of this subject along with methods used to tackle it can be found in the book \cite{H}. 
The results contained in \cite{F2} are especially interesting, since they are of different nature than our bounds and it should be possible to combine methods from this paper with our approach to get even better bounds. 

In \cite{PS}*{Section 7.4}, P\'olya and Szeg\"o conjectured that with fixed area, amongst all polygons with $n$ sides the regular one
minimizes the first eigenvalue (they also conjectured that the inner radius and the transfinite diameter are also minimized, but these has been proved). 
The conjecture remains open except for the following cases: 
\begin{gather}
  \lambda_1 A|_{Triangle}\geq \lambda_1 A|_{Equilateral},\label{tA}\\
  \lambda_1 A|_{Quadrilateral}\geq \lambda_1 A|_{Square},\label{qA}\\
  \lambda_1 A\geq\lambda_1 A|_{Ball}.\label{bA}
\end{gather}
The notation $|_{set}$ is used to indicate the set for the quantity to the left of it.
In the last bound (called the Faber-Krahn inequality) no set is specified since the bound is true for an arbitrary domain. The proofs of these results along with the
conjecture for polygons can be found in \cite{H}. Note that the ball in the last bound can be viewed as a limiting case of a regular polygon with an infinite number of sides.

There are also upper bounds where in this case the ball is the extremal.
\begin{gather}
  \lambda_1R^2\leq \lambda_1R^2|_{Ball}\label{bR},\\
  {\lambda_2\over\lambda_1}\leq \left.{\lambda_2\over\lambda_1}\right|_{Ball},
\end{gather}
\begin{gather}
  (\lambda_2-\lambda_1)R^2\leq (\lambda_2-\lambda_1)R^2|_{Ball}.
\end{gather}
The first inequality follows trivially from the domain monotonicity. The second and the third
are known as the Payne-P\'olya-Weinberger conjecture proved by Ashbaugh and Benguria \cite{AB}.
Note that there is no scaling factor (area or inradius) in the second bound.
The difference of the eigenvalues in the last bound is called the spectral gap and
it is important in the study of dynamical systems. It can be regarded as a measure of the speed of convergence to equilibrium.  For more on this, see \cite{bame} and \cite{smits}.

We can see that a ball gives both upper and lower bounds for general domains.
We want to show that an equilateral triangle has exactly the same properties among 
triangles. We already have the lower bound (\ref{tA}) as an analog of (\ref{bA}).
It is proved in \cite{S} that
\begin{gather}  
  \lambda_1R^2|_{Triangle}\leq\lambda_1R^2|_{Equilateral}.\label{tR}
\end{gather}
This is the parallel of the bound (\ref{bR}). 
The first  result of this paper is the following
\begin{theorem}\label{upper}
  For arbitrary triangle
  \begin{gather}\label{up1}
    (\lambda_2-\lambda_1)R^2|_{Triangle}\leq (\lambda_2-\lambda_1)R^2|_{Equilateral}.
  \end{gather}
  If a triangle is acute we also have
  \begin{gather}\label{up2}
    \left.{\lambda_2\over\lambda_1}\right|_{Triangle}\leq \left.{\lambda_2\over\lambda_1}\right|_{Equilateral}.
  \end{gather}
\end{theorem}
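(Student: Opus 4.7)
\textbf{Plan for Theorem \ref{upper}.}

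My plan rests on combining the already-available lower bound (\ref{tR}) on $\lambda_{1}R^{2}$ with new upper bounds for $\lambda_{2}$ obtained by the Rayleigh--Ritz method. Writing $E$ for the equilateral triangle, note that
\[
(\lambda_{2}-\lambda_{1})R^{2}=\lambda_{1}R^{2}\!\left(\tfrac{\lambda_{2}}{\lambda_{1}}-1\right),
\]
so in the acute case (\ref{up1}) follows from (\ref{up2}) together with (\ref{tR}), because both factors are then dominated by their equilateral values. Consequently the main work is to establish (\ref{up2}) for acute triangles and to handle (\ref{up1}) separately in the obtuse range.

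For (\ref{up2}) I would follow the Payne--P\'olya--Weinberger strategy adapted to triangles. Let $u_{1}$ be the first Dirichlet eigenfunction of the triangle $T$. For each linear function $f(x,y)=ax+by$, shifting by a constant so that $\int_{T}f\,u_{1}^{2}=0$ gives a trial function $v=fu_{1}$ orthogonal to $u_{1}$, hence
\[
\lambda_{2}-\lambda_{1}\le\frac{\int_{T}|\nabla f|^{2}u_{1}^{2}}{\int_{T}f^{2}u_{1}^{2}}.
\]
Choosing two orthogonal directions $f_{1},f_{2}$, taking the minimum of the two quotients, and exploiting the threefold symmetry of the equilateral triangle as the equality case, one reduces (\ref{up2}) to an inequality between geometric moments of $u_{1}^{2}$ on $T$ and those on $E$. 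In the acute case I would place the origin at the incenter; the inscribed disk of radius $R$ then lies inside $T$, so one can bound $\int_{T}f^{2}u_{1}^{2}$ below in terms of $R^{2}\int_{T}u_{1}^{2}$ by comparison with a radial rearrangement, while $\int_{T}|\nabla f|^{2}u_{1}^{2}$ is simply $|\nabla f|^{2}$ times $\int u_{1}^{2}$. Normalizing $\int u_{1}^{2}=1$ and applying (\ref{tR}) reinstalls the inradius on the correct side of the inequality and produces (\ref{up2}).

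For the obtuse portion of (\ref{up1}), the ratio inequality (\ref{up2}) need not hold and one should not expect it to, since a thin triangle has $\lambda_{2}/\lambda_{1}$ close to~$4$. Instead, I would exploit that $R$ becomes small for obtuse triangles: a right or obtuse triangle with inradius $R$ is contained in a slab or near-slab whose width is a small multiple of $R$, so domain monotonicity and explicit rectangle eigenvalues give a bare upper bound $\lambda_{2}R^{2}\le C$ that is smaller than $(\lambda_{2}-\lambda_{1})R^{2}|_{E}+\lambda_{1}R^{2}|_{E}$. Combined with the positivity of $\lambda_{1}R^{2}$ this yields (\ref{up1}) in the obtuse range, possibly after splitting into the ``near right'' and ``very obtuse'' regimes and checking a polynomial inequality in the angles to match the two arguments continuously at the right-angle boundary.

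The main obstacle, I expect, is producing the sharp test-function estimate for (\ref{up2}): the equality case is the equilateral triangle, so any slack in the choice of $f$ or in the moment comparison with the inscribed disk ruins the bound. In particular, one must use two orthogonal test functions rather than one (otherwise the estimate is lost for isosceles triangles close to equilateral), and one must ensure that the centering constants that force orthogonality to $u_{1}$ do not introduce uncontrolled geometric factors. Matching the acute bound to the obtuse argument across the right-triangle boundary is likely to require an explicit numerical/polynomial check of the type already advertised in the paper's keywords.
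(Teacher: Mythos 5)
Your reduction of \eqref{up1} to \eqref{up2} combined with \eqref{tR} is a valid observation in the acute case (the factorization $(\lambda_2-\lambda_1)R^2=\lambda_1 R^2\cdot(\lambda_2/\lambda_1-1)$ with both factors positive and each dominated by its equilateral value does give the gap bound), and the paper does not use this shortcut; it proves \eqref{up1} directly for all triangles via a separate polynomial inequality, so this is a genuine simplification for that subcase. However, the two central pieces of your plan do not survive scrutiny.

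The PPW-style argument for \eqref{up2} is the main gap. The test function $v=fu_1$ with $f$ linear yields $\lambda_2-\lambda_1\le\int|\nabla f|^2u_1^2\big/\int f^2u_1^2$, but the rearrangement step in Ashbaugh--Benguria that turns this into a sharp bound makes the \emph{ball} extremal, and indeed $\lambda_2/\lambda_1|_{\text{Ball}}\approx 2.539>7/3=\lambda_2/\lambda_1|_{\text{Equilateral}}$. Replacing the spherical rearrangement by a comparison with the inscribed disk and then invoking \eqref{tR} gives a chain whose sharp constant is still controlled by the ball, not the equilateral triangle; you acknowledge the slack problem but offer no mechanism that would single out the equilateral triangle as the equality case, and in fact there isn't one within the linear-$f$ PPW machinery. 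The paper instead bypasses PPW entirely: it takes explicit eigenfunctions of the equilateral triangle (and of the $30$--$60$--$90$ and right isosceles triangles), pushes them through the linear map identifying an arbitrary triangle with the reference triangle, evaluates the Rayleigh quotient of a two-dimensional trial space explicitly, and reduces \eqref{up2} (divided by P\'olya's lower bound \eqref{tA} for $\lambda_1$) to polynomial inequalities on rectangles in the $(U,M)$ parameter space, verified by the algorithm in Section~\ref{salg}. That is what actually forces the equilateral triangle to be extremal.

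The obtuse-triangle argument for \eqref{up1} is also broken. First, a right or slightly obtuse triangle is not ``thin'': the right isosceles triangle with legs $1$ has inradius $R=(2-\sqrt2)/2\approx0.293$, comparable to its dimensions. Second, and more decisively, even granting a ``bare'' bound $\lambda_2R^2|_T\le C$, to conclude $(\lambda_2-\lambda_1)R^2|_T\le(\lambda_2-\lambda_1)R^2|_E$ from $(\lambda_2-\lambda_1)R^2|_T\le\lambda_2R^2|_T$ you would need $C\le(\lambda_2-\lambda_1)R^2|_E=16\pi^2/27\approx5.85$. For the right isosceles triangle one has $\lambda_2R^2\approx8.47$, so no such $C$ exists near the right-angle boundary; dropping $\lambda_1R^2|_T$ while keeping the equilateral gap on the other side loses too much. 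The paper avoids this by never separating the obtuse case: it compares the variational upper bound for $\lambda_2$ against Freitas's lower bound \eqref{freitas} for $\lambda_1$ (which is strong precisely for obtuse triangles), multiplies by $R^2$, and verifies the resulting polynomial inequality over the entire parameter strip, so the obtuse and acute regimes are treated uniformly.
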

The additional assumption in the second part of the theorem is due to the method used to
prove the result. We believe this result should hold for all triangles.   The proof of the theorem relies on a variational formula for eigenvalues, as well as some very cumbersome computations.

In view of these results, we venture to propose the following generalization of 
P\'olya's conjecture about polygons
\begin{conjecture}
Let $P(n)$ denote a polygon with $n$ sides and $R(n)$ a regular polygon with
$n$ sides. Then
\begin{gather}
  \lambda_1 A|_{P(n)}\geq \lambda_1 A|_{R(n)},\label{pA}\\
  \lambda_1R^2|_{P(n)}\leq\lambda_1R^2|_{R(n)},\label{cR}\\
  (\lambda_2-\lambda_1)R^2|_{P(n)}\leq (\lambda_2-\lambda_1)R^2|_{R(n)},\\
  \left.{\lambda_2\over\lambda_1}\right|_{P(n)}\leq \left.{\lambda_2\over\lambda_1}\right|_{R(n)}.
\end{gather}
\end{conjecture}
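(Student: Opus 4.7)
The plan is to mimic, where possible, the proofs available for triangles. For (\ref{pA}), classical Steiner symmetrization cannot be used once $n \geq 5$ because it destroys the side count. I would instead set up a direct constrained minimization of $\lambda_1$ over the class of $n$-gons with fixed area. Existence of a minimizer in the closure requires a compactness argument: one bounds the diameter using (\ref{bA}) together with an isoperimetric inequality, and rules out collapse to a polygon with fewer sides by showing that such degenerations would contradict continuity of the eigenvalue, or by approximating via nearby strict $n$-gons. The key rigidity step is to show that any minimizer must be $R(n)$; I would do this via Hadamard's formula for the shape derivative of $\lambda_1$ under a one-parameter deformation of a single vertex. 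The first variation must vanish in every admissible direction, producing an Euler--Lagrange system at each vertex. If this system forces all sides equal in length and all interior angles equal, the minimizer is $R(n)$.

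For the three upper bounds, I would follow the variational template used in Theorem~\ref{upper}. Normalize $R = 1$, let $B$ be the inscribed unit ball, and let $u_1^\ast, u_2^\ast$ denote the first two Dirichlet eigenfunctions of $R(n)$. For (\ref{cR}), one takes a test function on $P(n)$ obtained by transplanting $u_1^\ast$ via a suitable Lipschitz map from $R(n)$ into $P(n)$ that does not inflate the Dirichlet energy relative to the $L^2$ norm; the resulting Rayleigh quotient then gives an upper bound on $\lambda_1(P(n))$ matching $\lambda_1(R(n))$. The spectral gap bound uses $u_1^\ast$ and $u_2^\ast$ jointly in the min-max characterization of $\lambda_2$. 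The ratio bound additionally requires that the nodal line of $u_2^\ast$ be compatible with a geometric feature of $P(n)$, which is likely why an acuteness-type hypothesis (as in Theorem~\ref{upper}) will reappear.

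The principal obstacle is the rigidity step for (\ref{pA}): the space of $n$-gons modulo similarity has dimension $2n - 4$, so the Euler--Lagrange system must decouple in exactly the right way to force regularity, and I do not see a direct reason why it should. A secondary obstacle is the construction of the transplant map in the upper-bound arguments: for convex $n$-gons close to $R(n)$ one can use a radial stretching, but for long thin polygons the map must be chosen with care to avoid inflating the Dirichlet integral. Overcoming either obstacle appears to require new ideas beyond the triangle case.
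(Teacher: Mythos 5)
The statement you set out to prove is labeled as a \textbf{Conjecture} in the paper, and the paper does not claim to prove it. The author explicitly writes, immediately after its statement, that ``All other cases remain open'': the conjecture is verified only for rectangles (``easy to check''), for quadrilaterals in the case of (\ref{pA}) (which coincides with the already-proved (\ref{qA})), and for triangles. The triangle case is the only nontrivial content, and it is handled not by the kind of general argument you sketch but simply by assembling prior results: (\ref{tA}) gives (\ref{pA}), the inequality (\ref{tR}) from \cite{S} gives (\ref{cR}), and Theorem~\ref{upper} of the present paper gives the spectral gap and ratio bounds (the ratio bound only for acute triangles). No attempt at the general polygonal case is made.

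Your outline is a reasonable sketch of how one might eventually attack the problem, and you correctly identify the two genuine obstacles --- rigidity at a critical $n$-gon via Hadamard's variation formula for (\ref{pA}), and the need for a non-energy-inflating transplantation map for the upper bounds --- but you should be aware that these obstacles are not ones the paper resolves; they are the reason the problem is still open. In particular, the Euler--Lagrange rigidity step you flag for (\ref{pA}) is essentially the content of the long-standing P\'olya--Szeg\H{o} polygon conjecture, which is still unproved for $n\geq 5$; even establishing the existence and regularity of a minimizer in the class of $n$-gons is not routine because the number of sides can drop in the limit. For the upper bounds, the paper's proof of Theorem~\ref{upper} for triangles relies on explicit closed-form eigenfunctions of the equilateral triangle, the $30$--$60$--$90$ triangle, and the right isosceles triangle, composed with linear maps; there is no analogous closed-form eigenbasis for $R(n)$ when $n\geq 5$, so the ``transplant $u_1^\ast, u_2^\ast$'' strategy has no obvious starting point there. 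The honest conclusion is that neither your sketch nor the paper contains a proof of the general conjecture; the paper's contribution to this statement is the triangle case, obtained from Theorem~\ref{upper} and the cited lower bounds, not from the variational/Hadamard machinery you propose.
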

It is easy to check that this conjecture is true for rectangles and that
for quadrilaterals (\ref{pA}) is the same as (\ref{qA}). Our results along with (\ref{tA}) and (\ref{tR}) prove this conjecture for triangles, except for obtuse triangles in the last bound.
All other cases remain open. It is worth noting that a slightly weaker version of (\ref{cR}) is a part of \cite{S2}*{Theorem 2}. The only difference is that the scaling factor $R$ is missing and the regular polygon has outer radius $1$ instead of inradius $1$. As we have been recently informed by the author of \cite{S2}, the proof of this theorem should work if we replace outer radius $1$ with inradius $1$. This would prove inequality (\ref{cR}).

The second goal of this paper is to establish sharper lower bounds for the first
eigenvalues of arbitrary triangles. In addition to (\ref{tA}) we have
\begin{gather}
  \lambda_1\geq \frac{\pi^2}4\left( R^{-2}+d^{-2}\right),\label{protter}\\
  \lambda_1\geq \pi^2\left( \frac4{d^2}+\frac{d^2}{4A^2} \right)\label{freitas}.
\end{gather}
The first bound is due to Protter \cite{Pr} and the second result was proved recently by
Freitas \cite{F}. Each of these  bounds is the better than the other bounds for certain triangles, but
not as good for some others. 

We obtained new lower bounds for triangles which are better than (\ref{protter}) and (\ref{freitas})
whenever these are better than (\ref{tA}).
\begin{theorem}\label{lower1}
  For an arbitrary triangle with area $A$, diameter $d$ and shortest altitude $h$,
  \begin{gather}
    \lambda_1|_{Triangle}\geq \pi^2\left( \frac4{d^2+h^2}+\frac{d^2+h^2}{4A^2} \right).
  \end{gather}
\end{theorem}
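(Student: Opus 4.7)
The plan is to prove the bound in two stages: a Steiner symmetrization reducing to an isosceles triangle, followed by a comparison with an explicit rectangle whose first Dirichlet eigenvalue realizes exactly the target lower bound.

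First, place $T$ with its longest side--which coincides with the diameter $d$ by convexity--along the segment $[0,d]\times\{0\}$, with the opposite vertex at $(x_0,h)$. Since $h$ is the shortest altitude, the maximality of $d$ among the side lengths forces $x_0\in[0,d]$, so $T$ is inscribed in the bounding rectangle $R_0=[0,d]\times[0,h]$. Steiner symmetrize $T$ about the perpendicular bisector of the base to produce the isosceles triangle $T^*$ with vertices $(0,0)$, $(d,0)$, and $(d/2,h)$. The P\'olya--Szeg\"o inequality gives $\lambda_1(T^*)\le\lambda_1(T)$, and the quantities $d$, $h$, and $A=dh/2$ are all preserved. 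It therefore suffices to prove the inequality for $T^*$.

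Set $D=\sqrt{d^2+h^2}$ and let $R$ denote the rectangle of dimensions $\tfrac{D}{2}\times\tfrac{2A}{D}$, which has area $A$. A direct computation shows
\[
\lambda_1(R)=\pi^2\!\left(\frac{4}{D^2}+\frac{D^2}{4A^2}\right),
\]
precisely the target right-hand side. The main task is therefore to prove $\lambda_1(T^*)\ge\lambda_1(R)$. Since $\tfrac{D}{2}\le d$ and $\tfrac{2A}{D}\le h$, the rectangle $R$ is not directly comparable to $R_0$ by containment, so we cannot invoke domain monotonicity. Instead, I would perform a second, tilted symmetrization (or a polarization, as suggested by the paper's keywords) of $T^*$ in the direction along the diagonal of the bounding box $R_0$; the fact that the diagonal of $R_0$ has length $D$ strongly suggests this is the right axis. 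By P\'olya--Szeg\"o this operation does not increase $\lambda_1$, and the resulting symmetric shape should be directly comparable to $R$ via an unfolding or inscribing argument.

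The principal obstacle is the second, tilted rearrangement: one must identify the correct axis, track the effect on the triangular slices, and verify that the resulting region admits a clean comparison with $R$ in the rotated frame. An alternative route is to expand the first eigenfunction $\phi$ of $T^*$ (extended by zero to $R_0$) in the Fourier basis $\sin(m\pi x/d)\sin(n\pi y/h)$, use the symmetry $\phi(x,y)=\phi(d-x,y)$ to eliminate even-$m$ modes, and translate the vanishing of $\phi$ on the two slant sides $y=2hx/d$ and $y=2h(1-x/d)$ into a family of linear constraints on the coefficients $c_{mn}$; the constrained minimum of the Rayleigh quotient should then equal $\lambda_1(R)$. Either route requires the sort of careful geometric and spectral computation characteristic of this family of isoperimetric problems, and this is where the main work lies.
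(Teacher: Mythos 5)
Your first step (Steiner symmetrization perpendicular to the longest side, producing the isosceles triangle $T^*$) and your identification of the target rectangle $R$ of dimensions $\tfrac{D}{2}\times\tfrac{2A}{D}$ with $D=\sqrt{d^2+h^2}$ are both correct and match the paper. But the bridge from $T^*$ to $R$ is missing, and the route you sketch---a single tilted Steiner symmetrization of $T^*$ along the diagonal of the bounding box---cannot work as stated. Steiner symmetrization preserves the one-dimensional cross-section lengths perpendicular to the chosen axis. For any axis, the cross-sections of a triangle vary (piecewise) linearly in the slicing parameter, while those of a rectangle are constant along one axis; hence no single Steiner symmetrization of $T^*$ can yield a rectangle, and there is nothing on which to invoke domain monotonicity against $R$. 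Noticing that $D$ is the diagonal of $R_0$ is a good hunch, but it does not by itself supply the construction; you explicitly concede that ``this is where the main work lies,'' and this is precisely the part that is not done.

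The paper closes the gap with \emph{two} further Steiner symmetrizations, and the intermediate shape is the key missing idea. Symmetrizing $T^*$ about the longest side (not a tilted axis) produces a rhombus with perpendicular diagonals of lengths $d$ and $h$; each side of this rhombus has length $\sqrt{(d/2)^2+(h/2)^2}=\tfrac{1}{2}\sqrt{d^2+h^2}$ by the Pythagorean theorem, which is exactly where $D$ enters. A third Steiner symmetrization, about the line perpendicular to one pair of parallel sides of the rhombus (its ``height'' direction), centers the constant-length parallel slices of the parallelogram and produces a rectangle with side $\tfrac{D}{2}$ and, since area is preserved, other side $\tfrac{2A}{D}$. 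This is the rectangle $R$, and the explicit eigenvalue formula for a rectangle finishes the proof. Your second suggested route (Fourier expansion of the eigenfunction of $T^*$ in the rectangle basis with boundary constraints on the slant sides) is speculative, is not carried out, and there is no evidence the constrained Rayleigh quotient minimizes at $\lambda_1(R)$; it should not be presented as a viable alternative without substantial further work.
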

We also obtained a sharp bound based on circular sectors.
\begin{theorem}\label{lower2}
  Let $\gamma$ be the smallest angle of a triangle.
  Denote by $I(A,\gamma)$ an isosceles triangle with same area $A$ and the
  vertex angle $\gamma$, and $S(A,\gamma)$ a circular sector with area $A$ and 
  angle $\gamma$. Then
  \begin{gather}
    \lambda_1|_{Triangle}\geq \lambda_1|_{I(A,\gamma)}\geq \lambda_1|_{S(A,\gamma)}.
  \end{gather}
  The function
  \begin{gather}
    f(\gamma)=\lambda_1|_{I(A,\gamma)}
  \end{gather}
  is decreasing for $\gamma\in(0,\pi/3)$ and increasing for $\gamma\in(\pi/3,\pi)$.
\end{theorem}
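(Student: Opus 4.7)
Plan: The theorem comprises two inequalities and a monotonicity statement, which I address in sequence with three distinct techniques.

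\emph{Triangle vs.\ isosceles.} Fix the vertex $V$ of smallest angle $\gamma$ at the origin with adjacent sides along rays at angles $\pm\gamma/2$ and lengths $p,q$ satisfying $pq\sin\gamma=2A$. The affine change of variables $(s,t)\mapsto((sp+tq)\cos(\gamma/2),(sp-tq)\sin(\gamma/2))$ carries the Dirichlet eigenvalue problem on the triangle to that of a constant-coefficient elliptic operator on the standard simplex $\Delta=\{s,t\ge 0,\,s+t\le 1\}$. After absorbing the constant $K=1/(2A\sin\gamma)$, the Rayleigh quotient takes the form
\[
\mathcal R(u;\tau)=\frac{\int_\Delta(e^{2\tau}u_s^2+e^{-2\tau}u_t^2-2\cos\gamma\,u_su_t)\,ds\,dt}{\int_\Delta u^2\,ds\,dt},
\]
with $\tau=\tfrac{1}{2}\log(p/q)$. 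By the $s\leftrightarrow t$ symmetry of $\Delta$, $\lambda_1(\tau)=\lambda_1(-\tau)$, the isosceles case is $\tau=0$, and $\lambda_1(\tau)\to\infty$ as $|\tau|\to\infty$. The crucial step is to show $\tau=0$ is the \emph{global} minimum; I plan to do this by combining the convexity of $\mathcal R(u;\tau)$ in $\tau$ for each fixed $u$ with the symmetric averaging trial function $\bar u_\tau(s,t)=\tfrac{1}{2}(u_\tau(s,t)+u_\tau(t,s))$ at parameter $0$.

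\emph{Isosceles vs.\ sector.} Place $I(A,\gamma)$ with its $\gamma$-vertex at the origin and symmetric across the positive $x$-axis. In polar coordinates $I=\{(r,\theta):|\theta|\le\gamma/2,\ r\le r_0\cos(\gamma/2)/\cos\theta\}$ with $r_0=\sqrt{2A/\sin\gamma}$, and $S=S(A,\gamma)=\{(r,\theta):|\theta|\le\gamma/2,\ r\le\rho\}$ with $\rho=\sqrt{2A/\gamma}$. Since $\sqrt{\sin\gamma/\gamma}<1$, neither domain contains the other, so domain monotonicity alone is insufficient. I plan to compare eigenvalues by expanding the first eigenfunction on $I$ in the angular Fourier basis $\cos((2n-1)\pi\theta/\gamma)$ (which vanishes on $\theta=\pm\gamma/2$). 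The Rayleigh quotient on $I$ decomposes into a sum over $n$; for the $n=1$ mode, the radial factor satisfies a Bessel-type eigenvalue problem on the variable interval $[0,r_0\cos(\gamma/2)/\cos\theta]$. Comparing mode by mode with the corresponding radial problem on the fixed interval $[0,\rho]$ (which governs $\lambda_1(S)$), and exploiting monotonicity of the Bessel zeros, should yield $\lambda_1(I)\ge\lambda_1(S)$.

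\emph{Monotonicity of $f(\gamma)$.} Normalize $A=1$. The function $f$ is smooth, $f(\gamma)\to\infty$ as $\gamma\to 0^+$ or $\gamma\to\pi^-$, and by P\'olya's inequality \eqref{tA} has a global minimum at $\gamma=\pi/3$ (the equilateral). Applying Hadamard's formula
\[
f'(\gamma)=-\int_{\partial I(1,\gamma)}|\nabla u|^2\,V\cdot n\,d\sigma,
\]
where $V$ is the area-preserving boundary velocity under variation of $\gamma$ (with the apex fixed, the two equal sides rotate about the apex and the base shifts accordingly), and using that by symmetry of $I(1,\gamma)$ the two equal sides contribute equally, the sign of $f'(\gamma)$ is determined by the explicit sign of $V\cdot n$ on one side combined with the positivity of $|\nabla u|^2$. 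A direct computation should yield $f'(\gamma)<0$ on $(0,\pi/3)$ and $f'(\gamma)>0$ on $(\pi/3,\pi)$.

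The principal obstacle is the triangle-vs.-isosceles step: while the reduction to $\Delta$ is clean and identifies $\tau=0$ as a critical point, proving global minimality of $\lambda_1(\tau)$ requires either a convexity argument for $\tau\mapsto\lambda_1(\tau)$ or a successful trial-function/rearrangement comparison, both of which are delicate. The sector inequality is the next most difficult, requiring careful mode-by-mode Bessel analysis, while the monotonicity reduces to a routine (if detailed) Hadamard computation.
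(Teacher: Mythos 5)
Your plan diverges entirely from the paper's proof, which is symmetrization-based throughout: the first inequality is proved via a carefully constructed finite sequence of polarizations that ``folds'' an arbitrary triangle with smallest angle $\gamma$ into a slightly dilated copy of $I(A,\gamma)$ (Lemma~\ref{symtr}); the second inequality follows by first symmetrizing $I(A,\gamma)$ into a kite (Lemma~\ref{doublesym}) and then iterating angular subdivisions with further polarizations to tuck the kite inside a sector; and the monotonicity claim is proved with continuous Steiner symmetrization. Your three sub-proofs are all different in spirit, but each has a substantive gap.

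For the triangle-vs.-isosceles step, your reduction to the simplex $\Delta$ with the one-parameter family $\mathcal R(u;\tau)$ is clean, and the symmetry $\lambda_1(\tau)=\lambda_1(-\tau)$ plus the blow-up at infinity do pin down $\tau=0$ as a critical point. But the crucial global-minimality claim is not established, and the two tools you propose do not combine to give it. Convexity of $\tau\mapsto\mathcal R(u;\tau)$ for each fixed $u$ does \emph{not} imply convexity of $\lambda_1(\tau)=\inf_u\mathcal R(u;\tau)$ (an infimum of convex functions need not be convex); moreover, for a \emph{fixed} $u$ the quantity $\mathcal R(u;\tau)$ is not even in $\tau$, so the even symmetry of $\lambda_1$ cannot be used pointwise. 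What you would actually need is an estimate of the form $\mathcal R(\bar u;0)\le\mathcal R(u_\tau;\tau)$ where $\bar u$ is some symmetrization of the minimizer $u_\tau$, and neither $\tfrac12(u_\tau(s,t)+u_\tau(t,s))$ nor any simple variant obviously satisfies this: the cross term $-2\cos\gamma\,u_su_t$ and the interaction between $u_\tau(s,t)$ and $u_\tau(t,s)$ have no sign control. In other words, you have reformulated the first inequality, not proved it.

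For the isosceles-vs.-sector step, the proposed Fourier decomposition does not decouple the problem. The angular modes $\cos((2n-1)\pi\theta/\gamma)$ handle the Dirichlet condition on $\theta=\pm\gamma/2$, but the outer boundary of $I(A,\gamma)$ is $r=r_0\cos(\gamma/2)/\cos\theta$, a $\theta$-dependent curve; imposing the Dirichlet condition there mixes all the angular modes. Consequently there is no ``Bessel-type radial problem on a variable interval'' for each $n$ separately, and the claimed mode-by-mode comparison with the sector's radial problem has no starting point. This is precisely the obstruction that forces the paper to use a geometric (polarization + subdivision) argument instead of separation of variables. Finally, for the monotonicity claim, Hadamard's variational formula is in principle applicable, but the sign analysis of $\int_{\partial I}|\nabla u|^2\,V\cdot n\,d\sigma$ under the area-preserving deformation is not routine: $V\cdot n$ changes sign around the boundary, and $|\nabla u|^2$ is not constant on the sides, so the claimed sign of $f'$ requires a genuine argument you have not supplied. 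The paper's continuous Steiner symmetrization argument sidesteps this entirely.
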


This last result can be viewed as a generalization of the P\'olya's isoperimetric 
bound (\ref{tA}). If we fix $A$ and the smallest angle, then the isosceles triangle
minimizes the first eigenvalue. Then, due to the monotonicity property, we also get an alternative
prove of (\ref{tA}). 
The bounds involving circular sectors can be used to get good lower bounds for
the eigenvalues since the eigenvalues of sectors are given explicitly in terms
of the zeros of the Bessel function. 

From now on $I(A,\gamma)$ will always denote an isosceles triangle with area $A$ and angle $\gamma$ between its sides of equal length. This angle will be called the vertex angle. The side opposite to that angle will be called the base and the other two the arms.  

The methods used to prove (\ref{freitas}) and (\ref{protter}) are not based on any kind of symmetrization argument.
In contrast, the proofs of our lower bounds rely on certain symmetrization techniques similar
to those used in the proof of P\'olya's isoperimetric inequality. In this sense, our results can be viewed as  generalized isoperimetric bounds. Using the same
techniques we also give an alternative proof of Freitas's bound (\ref{freitas}). This shows that symmetrization
is the best way for obtaining lower bounds for the eigenvalues, at least for triangles.
The bound is determined by the shape of the symmetrized domains.  That is, the equilateral triangle
in (\ref{tA}), the rectangles in Theorem \ref{lower1} and (\ref{freitas}), or 
the circular sector in Theorem \ref{lower2}. 

The rest of the paper is organized as follows. In Section 2 we compare the new lower
bounds with the known results. Section 3 contains the definitions and basic explanations of various forms of symmetrization techniques which are used in Section 4 to prove the lower bounds.
Section 5 contains the proofs of the upper bounds. Variational methods are used to
obtain complicated polynomial bounds for the eigenvalues. These bounds are then
simplified to the final results by proving certain polynomial inequalities.
The algorithm for solving such polynomial inequalities is given in  Section 6. The last section contains a script in Mathematica used to perform long calculations.

\section{Comparison of lower bounds}\label{scomp}

In this section we show that our bound (Theorem \ref{lower1}) is sharper than the
bound obtained by Freitas (\ref{freitas}) whenever the latter is better than
P\'olya's isoperimetric bound (\ref{tA}). We also give a numerical comparison between
the lower bounds to show that Theorem \ref{lower2} in practice gives the best lower bounds
for a wide class of triangles.

Both bounds (\ref{freitas}) and Theorem \ref{lower1} are of the form
\begin{gather}\label{genform}
  f(x)=\pi^2\left( \frac4x+\frac{x}{4A^2} \right).
\end{gather}
If we write the explicit value for the eigenvalue of the equilateral triangle,
the bound (\ref{tA}) reads
\begin{gather}
  \lambda_1\geq \frac{4\sqrt3 \pi^2}{3A}.
\end{gather}
To compare it to the other two we need to investigate the following inequality
\begin{gather}
  \frac{4\sqrt3}{3A}\geq \frac4x+\frac{x}{4A^2}.
\end{gather}
Put $x=4yA$, then
\begin{gather}
  \frac{4\sqrt3}3\geq \frac1y+\frac{y}1.
\end{gather}
One can check that the equality holds if $y=\sqrt3$ or $y=\sqrt3/3$. 
Hence, P\'olya's bound (\ref{tA}) is better than a bound of the type (\ref{genform})
if $x\in\left( 4A\sqrt3/3,4A\sqrt3 \right)$. We also observe that $f(x)$ is
increasing for $x\geq 4A\sqrt3$.

In the case of Freitas's bound (\ref{freitas}) (proved in \cite{F}) we get $x=d^2$. Hence this bound is better than P\'olya's bound if
\begin{gather}
  d^2\not\in\left( 4A\sqrt3/3,4A\sqrt3 \right).
\end{gather}
If we denote the length of the altitude perpendicular to the side of length $d$ by $h$ we see that
\begin{gather}
  d\not\in\left( 2h\sqrt3/3,2h\sqrt3 \right).
\end{gather}
Observe that $d$ cannot be smaller than $2h\sqrt3/3$ and it is equal to this
quantity only for an equilateral triangle. Hence Freitas's bound is better than P\'olya's bound
if
\begin{gather}\label{frbetter}
  d>2h\sqrt3.
\end{gather}

In Theorem \ref{lower1} we have $x=d^2+h^2$ which is a bigger value than in Freitas's bound.
Therefore this bound is the best for every triangle such that the above condition is true. 
Figure \ref{fig1} shows where each of the lower bounds (Protter (\ref{protter}), P\'olya (\ref{tA}) and Theorem \ref{lower1}) is the best.
\begin{figure}[htb]
\centering
\beginpgfgraphicnamed{triangles_pic1}
\begin{tikzpicture}[scale=5]
  \filldraw[fill=black!20,draw=black] plot coordinates {(1,1)(.9,1)(.89355,.98077)(.85115,.88635)(.80927,.79051)(.76688,.69609)(.7259,.60116)(.68351,.50674)(.64112,.41232)(.59873,.31791)(.55633,.22349)(.51394,.12907)(.47013,.03516)(.46384,.01923)(.455,0)(1,0)};
  \filldraw[fill=black!35,draw=black] plot coordinates {(0,.7)(.01923,.68113)(.05488,.63778)(.08667,.59057)(.11654,.54143)(.14352,.4894)(.16953,.43641)(.19362,.38149)(.21674,.32561)(.23794,.2678)(.25721,.20807)(.27648,.14834)(.29189,.08475)(.30731,.02116)(.31,0)(0,0)};
  \draw (0,1) -- (1,1);
  \draw[<->] (0,1.05) -- (0,0) -- (1.05,0);
  \foreach \x in {1,3,5,7}{
  \draw (\x/7-1/7,0.01) -- (\x/7-1/7,-0.01) node[below] {\small \x};}
  \foreach \x in {0,0.5,1}{
  \draw (0.01,\x) -- (-0.01,\x) node[left] {\small \x};}
  \draw (1/7,0) node[below] {\small $M$};
  \draw (0,0.25) node[left] {\small $U$};
  \draw (0.14,0.1) node {\small P\'olya};
  \draw (0.4,0.8) node {\small Theorem \ref{lower1}};
  \draw (0.8,0.3) node {\small Protter};
\end{tikzpicture}
\endpgfgraphicnamed

\caption{Theorem \ref{lower1}}
  \label{fig1}
\end{figure}

On this, and all other figures, $M$ denotes the side with the middle length, and $U$ is equal to the difference between the length of the longest side and $M$. The shortest side is assumed to be $1$, hence $U\leq 1$.
This gives a one-to-one mapping of all the triangles onto the infinite strip $[0,1)\times[1,\infty)$.

Next, we give some numerical results with Theorem \ref{lower2} included. Since the bound involving the eigenvalues of circular sectors rely on the calculations of the zeros of the Bessel function, it is hard to compare to the other bounds. The numerical comparisons from Figure \ref{fig2} show that this bound is better than the other bounds for almost all triangles. 

One could also use a simplified sector based lower bound by just taking the smallest sector containing a given triangle. This gives the  sector $S(\gamma N^2/2,\gamma)$ with the same angle as in Theorem \ref{lower2}, but larger radius. This is clearly worse than Theorem \ref{lower2}, but still gives a good bound as can be seen on Figure \ref{fig3}.

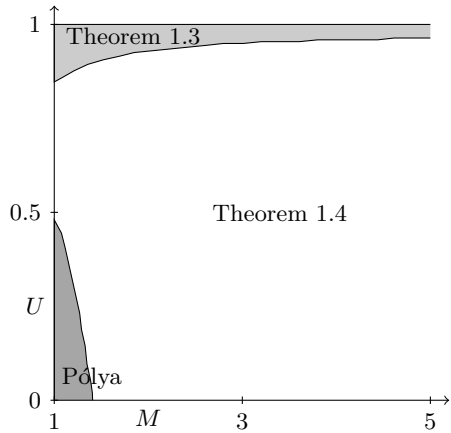
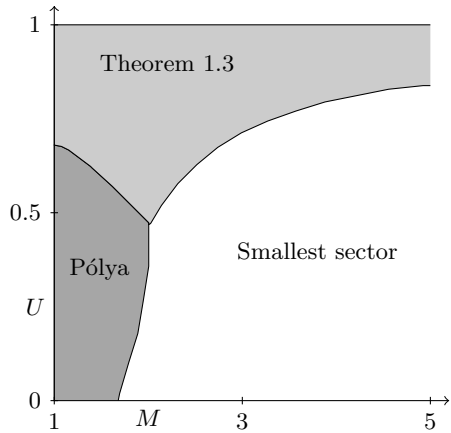
\begin{figure}[htb]
  \centering
  \subfloat[Theorem \ref{lower2}]{
\beginpgfgraphicnamed{triangles_pic2}
\begin{tikzpicture}[scale=5]
   \filldraw[fill=black!20,draw=black] plot coordinates {(1,1)(0,1)(0,.847)(.01923,.85756)(.05305,.87688)(.08929,.8938)(.13036,.90588)(.17385,.91554)(.2125,.9252)(.26082,.93003)(.30914,.93487)(.35746,.9397)(.40578,.94453)(.44927,.94936)(.50242,.94936)(.55073,.95419)(.60388,.95419)(.6522,.95419)(.70052,.95903)(.75367,.95903)(.80682,.95903)(.85997,.95903)(.90346,.96386)(.95661,.96386)(.98077,.96386)(1,.96386)};
    \filldraw[fill=black!35,draw=black] plot coordinates {(0,0)(0,.48)(.01923,.44443)(.02889,.40578)(.03856,.36229)(.04822,.31881)(.05789,.27532)(.06755,.23183)(.07238,.18835)(.08204,.14486)(.08688,.09654)(.09654,.05305)(.10137,.01923)(.102,.0)};
  \draw[<->] (0,1.05) -- (0,0) -- (1.05,0);
  \foreach \x in {1,3,5}{
  \draw (\x/4-1/4,0.01) -- (\x/4-1/4,-0.01) node[below] {\small \x};}
  \foreach \x in {0,0.5,1}{
  \draw (0.01,\x) -- (-0.01,\x) node[left] {\small \x};}
  \draw (1/4,0) node[below] {\small $M$};
  \draw (0,0.25) node[left] {\small $U$};
  \draw (0.1,0.06) node {\small P\'olya};
  \draw (0.6,0.5) node {\small Theorem \ref{lower2}};
  \draw (0.21,0.97) node {\small Theorem \ref{lower1}};
  \end{tikzpicture}
\endpgfgraphicnamed

  \label{fig2}
  }
  \subfloat[Smallest sector containing a triangle]{
\beginpgfgraphicnamed{triangles_pic3}
\begin{tikzpicture}[scale=5]
    \filldraw[fill=black!35,draw=black] plot coordinates {(0,0)(0,.68)(.01923,.67636)(.03856,.6667)(.09654,.62321)(.10137,.61838)(.15452,.57006)(.20284,.52174)(.25116,.47342)(.25116,.35746)(.23666,.26566)(.22217,.17868)(.19801,.10137)(.17385,.01923)(.17,0)};
    \filldraw[fill=black!20,draw=black] plot coordinates {(1,.83823)(.98077,.83823)(.88896,.82857)(.80441,.81165)(.71985,.79474)(.64254,.77058)(.56765,.74401)(.49758,.7126)(.43477,.67395)(.3792,.62804)(.32847,.57731)(.28498,.51933)(.25599,.47101)(.25116,.46859)(.25116,.47342)(.20284,.52174)(.15452,.57006)(.10137,.61838)(.09654,.62321)(.03856,.6667)(.01923,.67636)(0,.68)(0,1)(1,1)};
  \draw[<->] (0,1.05) -- (0,0) -- (1.05,0);
  \foreach \x in {1,3,5}{
  \draw (\x/4-1/4,0.01) -- (\x/4-1/4,-0.01) node[below] {\small \x};}
  \foreach \x in {0,0.5,1}{
  \draw (0.01,\x) -- (-0.01,\x) node[left] {\small \x};}
  \draw (1/4,0) node[below] {\small $M$};
  \draw (0,0.25) node[left] {\small $U$};
  \draw (0.12,0.35) node {\small P\'olya};
  \draw (0.7,0.4) node {\small Smallest sector};
  \draw (0.3,0.9) node {\small Theorem \ref{lower1}};
  \end{tikzpicture}
\endpgfgraphicnamed

  \label{fig3}
}
\caption{Sector-based bounds}
\end{figure}

We do note here that there is no best bound, although our
new bounds together with P\'olya's give the best results depending on the triangle. We also see that symmetrization techniques (all bounds are consequences of certain symmetrization procedures) are very powerful and lead to very good lower bounds for the first eigenvalue of triangles.

It would be also interesting to see how far the bounds are from the exact values. We begin with a right isosceles triangle with arms of length $1$. The first eigenvalue is known in this case and it is equal to $5\pi^2$. The lower bounds results are given in Table \ref{tab1}.
Clearly, neither bound is particularly accurate, although P\'olya's bound and Theorem \ref{lower2} are the closest.

The latter bound works best for ``taller'' triangles. Consider a right triangle with angle $\pi/6$ and hypotenuse $2$. Table \ref{tab2} shows the
values of the lower bounds for this triangle.
\begin{table}[htb]
  \centering
  \subfloat[Right isosceles triangle]{
  \begin{tabular}{|l|l|}
    \hline
  exact& 49.348\\
  \hline
  P\'olya (\ref{tA})& 45.5858\\
  Freitas (\ref{freitas})& 39.4784\\
  Protter (\ref{protter})& 29.9958\\
  Theorem \ref{lower1}& 40.4654\\
  Theorem \ref{lower2}& 45.2255\\
  \hline
\end{tabular}
  \label{tab1}
  }
  \hspace{1cm}
  \subfloat[Half of the equilateral triangle]{
  \begin{tabular}{|l|l|}
    \hline
  exact& 30.7054\\
  \hline
  P\'olya (\ref{tA})& 26.3189\\
  Freitas (\ref{freitas})& 23.0291\\
  Protter (\ref{protter})& 19.0338\\
  Theorem \ref{lower1}& 23.9381\\
  Theorem \ref{lower2}& 29.8449\\
  \hline
\end{tabular}
  \label{tab2}
}
\caption{Triangles with known first eigenvalues}
\end{table}
This time Theorem \ref{lower2} gives a very close bound, while the others are not so accurate. 
The advantage of the sector bound is the biggest for very ``tall'' triangles. For an isosceles triangle with base $1$ and fixed arms we get Table \ref{tab3}.
The exact values are not known, but the difference between the bounds given by Theorem \ref{lower2} and the other bounds is clear.

\begin{table}[htb]
  \centering
  \begin{tabular}{|l|l|l|}
    \hline
   & arm $2$ & arm $4$\\
   \hline
   P\'olya (\ref{tA})&23.5404&11.4865\\
   Freitas (\ref{freitas})&20.3972&12.4937\\
   Protter (\ref{protter})&17.0662&12.8437\\
   Theorem \ref{lower1}&20.9906&12.9675\\
   Theorem \ref{lower2}&27.0781&18.8754\\
   Upper bound (6.1) in \cite{S}&27.6695&18.9749\\
   \hline
 \end{tabular}
  \caption{Tall isosceles triangles}
  \label{tab3}
 \end{table}

 All these numerical results show that the lower bounds can be very accurate if the triangle is acute and almost isosceles (P\'olya's bound or Theorem \ref{lower2}). Unfortunately, neither bound is very good in the case of ``wide'' obtuse triangles. The values of the lower bounds for an isosceles triangle with base $1.95$ and arms $1$ are given in Table \ref{tab4}.

\begin{table}[htb]
  \centering
 \begin{tabular}{|l|l|}
   \hline
   P\'olya (\ref{tA})&105.206\\
   Freitas (\ref{freitas})&210.273\\
   Protter (\ref{protter})&205.698\\
   Theorem \ref{lower1}&212.735\\
   Theorem \ref{lower2}&185.161\\
   \hline
   Conjecture 1.2 in \cite{S}&\\
   \hline
   Lower bound&251.077\\
   Upper bound&299.7\\
   \hline
 \end{tabular}
  \caption{Wide isosceles triangle}
  \label{tab4}
\end{table}

Clearly Freitas, Protter and Theorem \ref{lower1} are the best in this case, but those values are not very accurate. In fact, neither bound allows us to prove the second part of the Theorem \ref{upper} in the case of obtuse triangles. Note that Conjecture 1.2 in \cite{S} could provide a lower bound strong enough for this task. Theorems 2 and 5.1 in \cite{F2} could also provide good enough bounds.

\section{Symmetrization techniques}\label{ssym}
In this section we present various geometric transformations that decrease the first eigenvalue. The following subsections contain strict definitions of three different kinds of symmetrization.
Here we just remark that the important common property of those transformations is that they are contractions on $W^{1,2}(\Rd)\cap C_c(\Rd)$ and isometries on $L^2(\Rd)$. 
Those properties, together with the minimax formula for the first Dirichlet eigenvalue, give $\lambda_1(\Omega)\geq \lambda_1(\Omega^*)$. The most general reference for those results is \cite{H}.

\subsection{Steiner Symmetrization}
We start with the well known Steiner symmetrization 
(see for example \cite{PS}*{Note A} or \cite{H}*{Chapter 2}).
Fix a line $l=\{ax+b:x\in\Real\}$ where $a$ and $b$ are arbitrary points on the plane.
Let $\{l_t\}_{t\in\Real}$ be a family of lines perpendicular to $l$ such that for each $t$ the line $l_t$ passes through the point $at+b$.
For an arbitrary domain $\Omega$ we define its Steiner symmetrization $\Omega^*$ with respect to $l$ 
as a convex domain symmetric with respect to $l$ and such that for every line $l_t$ 
\begin{gather}
  |\Omega^*\cap l_t|=|\Omega\cap l_t|,
\end{gather}
where $|\cdot|$ denotes the $1$-dimensional Lebesgue measure.

Intuitively, we look at the cross-sections of $\Omega$ perpendicular to $l$ and we
center them around $l$ (see Figure \ref{sym} for an example).

This procedure has many interesting properties (see \cites{H,PS}).
While the area remains fixed, the perimeter decreases and the inradius increases.
But the most interesting property from our point of view is that the first
eigenvalue of the Dirichlet Laplacian on $\Omega$ is bigger than that on $\Omega^*$.
This basic property has been widely used in the proofs of many isoperimetric bounds
for eigenvalues in many settings, see for example \cites{AB,PS,H}. 
We will use this property to prove our results.

\subsection{Continuous Steiner Symmetrization}
The second type of symmetrization is the continuous Steiner symmetrization introduced
by P\'olya and Szeg\"o in \cite{PS}*{Note B}. Different versions of it have been studied by Solynin \cite{S1} and Brock \cites{B1,B2}.  The difference in case of convex domains is only in the way of defining ``continuity parameters''. The version studied in \cites{B1,B2} is more general and it works even for not connected domains. We refer the reader to these three papers and to \cite{H} for the
properties of this transformation. Here we only give the definition valid for convex domains.
As above we look at all the intervals $(a_t,b_t)$ which are the intersections of $\Omega$ with $l_t$.
Let $(a_t',b_t')$ be the Steiner symmetrized interval.
Let $a_t^\alpha = a_t+\alpha(a_t'-a_t)$ and $b_t^\alpha = b_t+\alpha(b_t'-b_t)$. 
Hence we are shifting the intersections with constant speed from their initial
position to the fully symmetrized position.
We define the continuous Steiner symmetrization $\Omega^\alpha$ of a domain $\Omega$ by
\begin{gather}
\Omega^\alpha=\bigcup_{t\in\Real}(a_t^\alpha,b_t^\alpha).
\end{gather}

\begin{figure}[htb]
  \begin{center}
\beginpgfgraphicnamed{triangles_pic4}
\begin{tikzpicture}[scale=1.4]
\fill[black!25] (0.185059,0.257354) -- (2.43598,2.73219) -- (8.18506,0.257354);
\draw[dotted,-stealth'] (0.577486,2.73219) -- (4.18506,2.73219);
\draw (4.18506,2.73219) -- (0.185059,0.257354)
  (4.18506,2.73219) -- (8.18506,0.257354)
  (8.18506,0.257354) -- (0.577486,2.73219)
  (0.185059,0.257354) -- (8.18506,0.257354)
  (0.577486,2.73219) -- (0.185059,0.257354)
  (4.18506,-0.2) -- (4.18506,3.5);
\fill
  (0.577486,2.73219) circle (0.6pt)
  (8.18506,0.257354) circle (0.6pt)
  (0.185059,0.257354) circle (0.6pt)
  (2.43598,2.73219) circle (0.6pt)
  (4.18506,2.73219) circle (0.6pt);
\draw (0.577486,2.73219) node[above] {\small $\Omega^0=\Omega$}
  (2.43598,2.73219) node[above] {\small $\Omega^\alpha$ }
  (4.18506,2.73219) node[above right] {\small $\Omega^1=\Omega^*$ }
  (4.25506,3.15) node[left] {\small $l$ };
  \end{tikzpicture}
\endpgfgraphicnamed

  \end{center}
\caption{Continuous Steiner symmetrization}
\label{sym}
\end{figure}
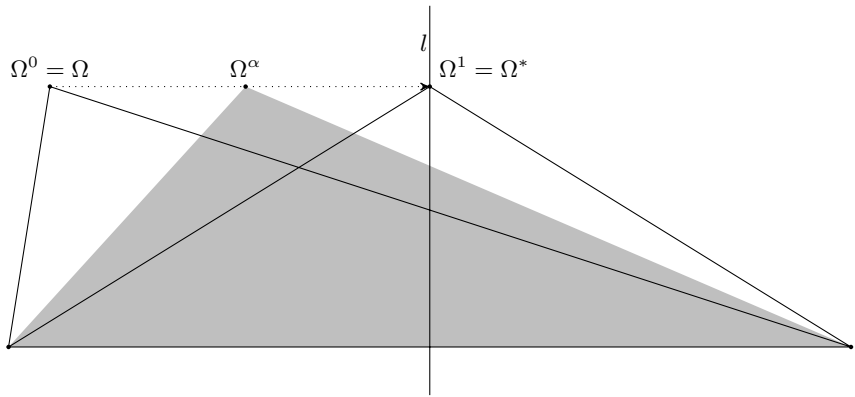

Figure \ref{sym} shows the action of continuous Steiner symmetrization on triangles. We can see that $\Omega^0=\Omega$ and $\Omega^1=\Omega^*$.
The first eigenvalue of the domain $\Omega^\alpha$ is decreasing when
$\alpha$ is increasing (see \cites{B1,B2,H}) just like in the case of the classical 
Steiner symmetrization. Note that our ``continuity parameter'' $\alpha$ is related to the one in \cite{H} by $1-\alpha=e^{-t}$.

\subsection{Polarization}
The last technique we want to introduce is called  polarization. It was used in \cite{Du} to prove general inequalities for capacities and eigenvalues.
It has also been useful in studying other types of symmetrization (see \cites{S1,BS}) and heat kernels for certain operators (see \cite{Dr}). 
Let $l$ be an arbitrary line, $H_1$ and $H_2$ two halfspaces with boundary $l$.
For $x\in H_2$ let $\bar{x}$ denote the reflection of $x$ with respect to $l$.
Polarization of a domain $\Omega$ is defined pointwise by the following transformation
\begin{definition}
  The polarization of a set $\Omega$ with respect to $l$ is a set $\Omega^P$ with the following properties.
  \begin{enumerate}
    \item If $x\in \Omega\cap H_1$ then $x\in \Omega^P$.
    \item If $x\in \Omega\cap H_2$ then $\bar{x}\in \Omega^P$.
    \item If both $x$ and $\bar{x}$ are in $\Omega$, then both $x$ and $\bar{x}$ are in $\Omega^P$,
  \end{enumerate}
\end{definition}

In essence, we reflect the set $\Omega$ with respect to $l$, the intersection of $\Omega$
with its reflection is in the polarized domain $\Omega^P$ and all other points of $H_1$ that
are in either $\Omega$ or its reflection also belong to $\Omega^P$. Figure \ref{polar} shows a triangle,
its reflection with respect to $l$ and the polarized domain (outlined polygon).

\begin{figure}[htb] 
  \centering
\beginpgfgraphicnamed{triangles_pic5}
\begin{tikzpicture}
\fill[black!25] (0.184039,2.02254) -- (2.87248,0.127085) -- (7.01153,4.3435);
\fill[black!45] (0.184039,3.67177) -- (2.87248,5.56722) -- (7.01153,1.35081);
\draw (4.80881,2.09961) -- (0.184039,3.67177) -- (2.87248,5.56722) -- (4.80881,3.5947) -- (7.01153,4.3435) -- (4.80881,2.09961)
(0,2.84715) -- (7.31157,2.84715) node[very near start,below] {\scriptsize $l$};
  \end{tikzpicture}
\endpgfgraphicnamed

  \caption{Polarization of sets}
\label{polar}
\end{figure}
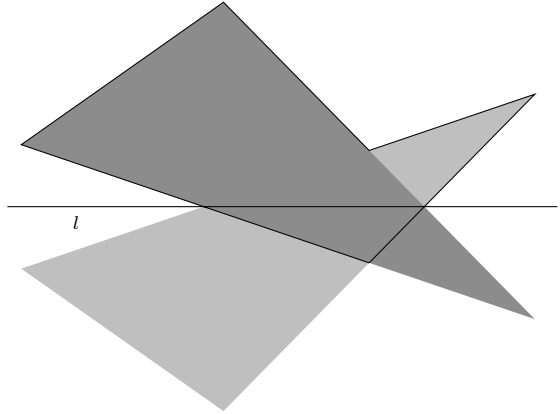

This simple procedure has many interesting monotonicity properties, among them is 
the monotonicity of the eigenvalues. For this, and many other properties we
refer the reader to \cites{BS,Dr} and to references in these papers.
\section{The proofs of the lower bounds}\label{splow}

Before we prove Theorems \ref{lower1} and \ref{lower2} let us present a one simple application of the continuous Steiner symmetrization. This and other kinds of symmetrization were introduced in Section \ref{ssym}.

Let $T$ be an acute isosceles triangle with area $A$ and arms of length $d$. Let $l$
be a line perpendicular to one of the arms and passing through the midpoint of this arm.
If we apply the continuous Steiner symmetrization with respect to $l$ we get
\begin{lemma}
  Amongst triangles $T$ with diameter $d$ and area $A$, isosceles triangle with arms $d$
  maximizes the first eigenvalue and isosceles triangle with base $d$ minimizes
  it (See Figure \ref{sym}).
\end{lemma}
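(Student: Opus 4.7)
The plan is to derive both inequalities from a single application of continuous Steiner symmetrization starting at the isosceles triangle with arms $d$. Since the diameter of a triangle equals its longest side, every triangle $T$ with area $A$ and diameter $d$ can be placed with its longest side on the $x$-axis from $V=(0,0)$ to $B=(d,0)$; the third vertex is then $C=(x,h)$ with $h=2A/d$, and the constraints $|VC|,|BC|\le d$ force $x$ into the symmetric interval $[d-\sqrt{d^2-h^2},\,\sqrt{d^2-h^2}]$.  The midpoint $x=d/2$ corresponds to the isosceles triangle with base $d$, and the two endpoints correspond to the isosceles triangle with arms $d$, which is automatically acute since arms of length $d$ and base at most $d$ force the vertex angle $\gamma\le\pi/3$.

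I would then realise the isosceles triangle with arms $d$ in these coordinates, with apex at $V$, one arm along $VB$ and the other arm ending at $C_0=(d\cos\gamma,\,d\sin\gamma)$, so that $h=d\sin\gamma$, and apply continuous Steiner symmetrization with respect to the line $l=\{x=d/2\}$ perpendicular to $VB$ through its midpoint, as the setup suggests.  The horizontal cross-sections at height $y\in[0,h]$ are intervals of length $d-y/\sin\gamma$ whose midpoints depend linearly on $y$, so a short explicit calculation shows that $\Omega^\alpha$ is itself a triangle with vertices $V$, $B$ and $C_\alpha=((1-\alpha)d\cos\gamma+\alpha d/2,\,h)$.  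Consequently $\alpha\mapsto C_\alpha$ traces the right half of the admissible interval of apex positions, from $C_0$ (the isosceles with arms $d$) at $\alpha=0$ to $(d/2,h)$ (the isosceles with base $d$) at $\alpha=1$; the mirror family obtained by swapping the two arms covers the left half and has the same eigenvalues.

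Since $\lambda_1$ is monotonically nonincreasing along continuous Steiner symmetrization, every admissible $T=\Omega^\alpha$ satisfies $\lambda_1(\Omega^1)\le\lambda_1(\Omega^\alpha)=\lambda_1(T)\le\lambda_1(\Omega^0)$, which is precisely the two inequalities of the lemma.  The main obstacle is a small amount of geometric bookkeeping: verifying from the explicit formula for $C_\alpha$ that $\Omega^\alpha$ is a genuine triangle for every $\alpha\in[0,1]$, that $VB$ remains its longest side (so the diameter of $\Omega^\alpha$ stays equal to $d$), and that the family together with its mirror image fills the whole admissible range of apex positions.  Once these are in hand, both bounds follow immediately from the monotonicity of $\lambda_1$ under continuous Steiner symmetrization recalled in Section \ref{ssym}.
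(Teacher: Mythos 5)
Your proof is correct and takes essentially the same route as the paper: continuous Steiner symmetrization of the acute isosceles triangle with arms of length $d$, performed with respect to the perpendicular bisector of one of its arms, which is exactly the one-sentence construction the paper gives just before stating the lemma. You go further than the paper in writing out the explicit parametrization $C_\alpha$, verifying that each $\Omega^\alpha$ is a triangle whose longest side stays equal to $d$, and checking that this family together with its mirror image sweeps the full admissible range of apex positions, but the underlying argument is the same.
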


This simple result shows that upper bounds for the eigenvalues can also be obtained
using symmetrization techniques. Using the largest circular sector contained in $T$ we can get very accurate upper bounds for the eigenvalues of triangles. 

We can use polarization to give an alternate proof of (\ref{freitas}). A very similar procedure, although more complicated, will be used later to prove Theorem \ref{lower2}.
Let $T$ be an arbitrary triangle with the longest side of length $d$. 
First, we use the Steiner symmetrization with
respect to a line perpendicular to the longest side. As a result we have an
isosceles triangle $ABC$ with the base of length $d$.

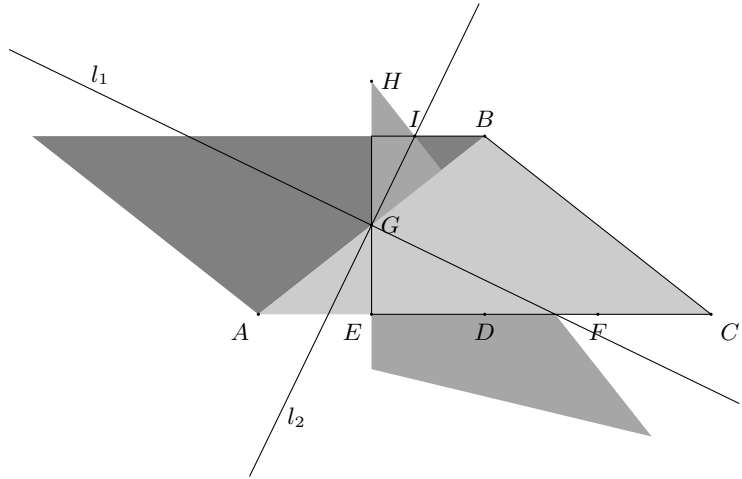
\begin{figure}[htb]
  \centering
\beginpgfgraphicnamed{triangles_pic6}
\begin{tikzpicture}[scale=0.06]
\fill[black!50] (105.404,75.4777) -- node[at start,above,black] {\small $B$} (5.10483,75.4777) -- (55.2546,35.9687) node [at end,below left,black] {\small $A$};
\fill[black!35] (80.3295,87.6448) -- node[at start,right,black] {\small $H$} (142.399,8.85811) -- (80.3295,23.8016);
\fill[black!20] (55.2546,35.9687) -- (155.554,35.9687) -- (105.404,75.4777);
\draw (0,94.7016) -- (161.821,16.1811) node[very near start,above] {\small $l_1$}
(104.181,104.878) -- (53.2908,0) node[very near end,right] {\small $l_2$}
(80.3295,35.9687) -- (80.3295,75.4777) -- (105.404,75.4777) -- (155.554,35.9687) -- cycle;
\fill (55.2546,35.9687) circle (10pt)
  (80.3295,35.9687) circle (10pt)
  (105.404,35.9687) circle (10pt)
  (130.479,35.9687) circle (10pt)
  (155.554,35.9687) circle (10pt)
  (80.3295,55.7232) circle (10pt)
  (89.915,75.4777) circle (10pt)
  (105.404,75.4777) circle (10pt)
  (80.3295,87.6448) circle (10pt);
\draw (80.3295,35.9687) node[below left] {\small $E$}
(105.404,35.9687) node[below] {\small $D$}
(130.479,35.9687) node[below] {\small $F$}
(155.554,35.9687) node[below right] {\small $C$}
(80.3295,55.7232) node[right] {\small $G$}
(89.915,75.4777) node[above] {\small $I$};
  \end{tikzpicture}
\endpgfgraphicnamed

  \caption{Freitas's bound}
  \label{figfr}
\end{figure}

The further construction is shown on Figure \ref{figfr}.
We divide the longest side $AC$ into four equal parts. Then we construct the line
$EH$ perpendicular to $AC$. Finally we construct the bisector $l_1$ of the angle $AGH$.
We apply the polarization with respect to the line $l_1$. The darker triangle with
vertex $H$ is the result of the reflection with respect to $l_1$. Note that we need
the line $l_1$ to cut the side $AC$ between the points $E$ and $F$. If not, then
we would not be able to symmetrize the other half of the triangle (the bisector would cut
the sector $EG$, giving unnecessary reflections). Fortunately one can check that under
the condition (\ref{frbetter}) this is always true. 

Now we perform one more polarization with respect to the line $l_2$, which is the bisector 
of the angle $HIB$. As can be seen on Figure \ref{figfr}, the left side of the
triangle has been changed into a rectangle. 

If we repeat the procedure on the other side we get the rectangle with the base $EF$
and the height $BD$. But for a rectangle we have the following explicit formula for the first eigenvalue.
\begin{gather}\label{eigrect}
  \lambda_1|_{Rectangle}=\pi^2\left( a^{-2}+b^{-2} \right),
\end{gather}
where $a$ and $b$ are the lengths of the sides. In our case the lengths are
$|EF|=d/2$ and $|BD|=h$. This gives Freitas's bound (\ref{freitas}).

To get a sharper bound we need to symmetrize the triangle into a rectangle with a
longer base and a shorter height. We start with the same Steiner symmetrization as before,
That is, we symmetrize with respect to the line perpendicular to the longest side. Next, we perform
one more Steiner symmetrization but with respect to the longest side. This gives a
rhombus with diagonals of length $|CD|=h$ and $|EF|=h$ (see Figure \ref{figl1}). 

\begin{figure}[htb]
  \centering
\beginpgfgraphicnamed{triangles_pic7}
\begin{tikzpicture}[scale=0.7]
\filldraw[fill=black!25] (0.276618,2.34093) -- (5.27662,4.34093) -- (10.2766,2.34093) -- (5.27662,0.340928) -- cycle;
\draw (0.276618,2.34093)  -- node[at start,left] {\small $A$} (10.2766,2.34093) node[at end,right] {\small $B$};
\draw (5.27662,4.34093) -- node[at start,above] {\small $D$} (3.89731,0.892652) node [at end,below left] {\small $E$};
\draw (5.27662,4.34093) -- (5.27662,0.340928) node [at end,below] {\small $C$};
\fill (0.276618,2.34093) circle (1pt)
(5.27662,4.34093) circle (1pt)
(5.27662,0.340928) circle (1pt)
(10.2766,2.34093) circle (1pt)
(3.89731,0.892652) circle (1pt);
  \end{tikzpicture}
\endpgfgraphicnamed

  \caption{Theorem \ref{lower1}}
  \label{figl1}
\end{figure}

If we apply the Steiner symmetrization one more time but
with respect to the height $DE$ of the rhombus, we obtain the rectangle with base 
$AC$ and height $DE$. Using Pythagorean theorem we find that $|AC|=\sqrt{d^2+h^2}/2$.
Since the area $A$ of the triangle remains constant under symmetrization, we also have 
$|DE|=A/|AC|$. These, together with (\ref{eigrect}), give the proof of Theorem 
\ref{lower1}. Note, that the proof of Theorem \ref{lower1} is actually easier than the proof of Freitas's bound (\ref{freitas}).

It remains to prove Theorem \ref{lower2}. Let us begin with a lemma.
\begin{lemma}\label{symtr}
  Let $T$ be a triangle with the smallest angle $\gamma$. Let $T'$ be a triangle 
  with same area $A$ and same smallest angle $\gamma$ but with a smaller 
  diameter. Then
  \begin{gather}
    \lambda_1|_T\geq \lambda_1|_{T'}.
  \end{gather}
\end{lemma}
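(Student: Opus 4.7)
My plan is to establish the following monotonicity statement, which immediately yields the lemma: within the one-parameter family $\mathcal F_{A,\gamma}$ of triangles with area $A$ and smallest angle $\gamma$ at a designated vertex $V_1$, the first eigenvalue $\lambda_1$ is a non-decreasing function of the diameter. I would place $V_1$ at the origin with its two adjacent sides along rays at angles $0$ and $\gamma$, and set $V_2=(b,0)$, $V_3=(c\cos\gamma, c\sin\gamma)$ with $b\ge c$ and $bc=2A/\sin\gamma$. Then $d=b$, and the isosceles triangle $I(A,\gamma)$ corresponds to $b=c=\sqrt{2A/\sin\gamma}$, which is the extreme (smallest-diameter) member of the family.

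Next I would apply Hadamard's variational formula along the deformation in which $b$ varies and $c=2A/(b\sin\gamma)$ adjusts accordingly. Because the sides $V_1V_2$ and $V_1V_3$ merely extend or contract along fixed rays through $V_1$, their interior points carry zero normal velocity, and the only boundary contribution comes from $V_2V_3$. A short computation of the normal velocity there gives
\begin{equation}
\frac{d\lambda_1}{db} \;=\; c\sin\gamma \int_0^1 (\partial_\nu u)^2 (2t-1)\, dt,
\end{equation}
where $t\in[0,1]$ parametrizes $V_2V_3$ from $V_2$ to $V_3$, $\nu$ is the outward unit normal, and $u$ is the normalized first Dirichlet eigenfunction. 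The monotonicity of $\lambda_1$ in $b$ reduces to the sign statement that this integral is non-negative, i.e.\ that $(\partial_\nu u)^2$ is on average larger on the half of $V_2V_3$ nearer $V_3$ than on the half nearer $V_2$.

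The essential geometric input is that $b>c$ forces the interior angle at $V_3$ (opposite $V_1V_2$) to strictly exceed the interior angle at $V_2$, since in any triangle the longer side is opposite the larger angle. The classical corner asymptotics $u\sim r^{\pi/\phi}\sin(\pi\theta/\phi)$ near a vertex of interior angle $\phi$ then give $|\partial_\nu u|\sim r^{\pi/\phi-1}$, so the normal derivative decays more slowly toward the larger-angle vertex $V_3$, supplying the expected sign near the two endpoints of $V_2V_3$.

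The main obstacle is upgrading this local asymptotics into a rigorous integrated inequality along the entire side. A natural route is a reflection argument: letting $\mathcal R$ denote the reflection across the perpendicular bisector of $V_2V_3$, the union $T\cup \mathcal R T$ is symmetric under $\mathcal R$, and one can compare $u$ with the symmetric first eigenfunction of $T\cup\mathcal R T$ to control $(\partial_\nu u)^2(t)-(\partial_\nu u)^2(1-t)$. An alternative that bypasses Hadamard altogether is to apply continuous Steiner symmetrization to $T$ with respect to the perpendicular bisector of $V_2V_3$: this produces an explicit one-parameter family of triangles with fixed base $V_2V_3$ and apex moving linearly from $V_1$ to its symmetric position, along which $\lambda_1$ is monotone; differentiating at the initial parameter yields a weighted boundary integral that one relates to the Hadamard expression above, closing the argument.
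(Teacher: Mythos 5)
Your reduction via Hadamard's variational formula is computed correctly: placing $V_1$ at the origin with adjacent sides along fixed rays makes the only moving side $V_2V_3$, and the normal velocity there is indeed proportional to $1-2t$, giving
\begin{equation*}
\frac{d\lambda_1}{db} = c\sin\gamma \int_0^1 (\partial_\nu u)^2 (2t-1)\, dt .
\end{equation*}
However, there is a genuine gap at precisely the point you flag: the sign of this integral is never established. The corner asymptotics $|\partial_\nu u|\sim r^{\pi/\phi-1}$ only control $(\partial_\nu u)^2$ in small neighborhoods of $V_2$ and $V_3$; they say nothing about which side dominates over the middle portion of $V_2V_3$, and the weight $2t-1$ integrates to zero, so the conclusion is genuinely global. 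Neither of the two suggested repairs is actually carried out. The reflection comparison with the symmetric first eigenfunction of $T\cup\mathcal R T$ is stated as a hope, not a proof, and it is far from clear how to extract the pointwise (or even integrated) comparison $(\partial_\nu u)^2(t)\ge(\partial_\nu u)^2(1-t)$ from a domain-monotonicity-type statement about the union. The continuous Steiner symmetrization alternative has a more structural problem: the CSS with respect to the perpendicular bisector of $V_2V_3$ fixes $V_2V_3$ and moves $V_1$ parallel to it, so it changes the apex angle, does not stay inside the family $\mathcal F_{A,\gamma}$, and its Hadamard derivative is a boundary integral supported on $V_1V_2\cup V_1V_3$, not on $V_2V_3$. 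Relating that integral to the one you need would require additional identities (translation/rotation/Rellich type) that you have not supplied; as written, the two boundary integrals are simply different objects.

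For comparison, the paper sidesteps all of these analytic difficulties by an entirely discrete symmetrization argument: it constructs a finite sequence of polarizations (built from repeated reflections across carefully chosen angle bisectors) that folds $T$ into a slightly enlarged copy $T'_\varepsilon$ of $T'$, verifies that each step is a genuine polarization and hence eigenvalue-non-increasing, shows the construction terminates, and then lets $\varepsilon\to 0$. That route replaces the hard quantitative estimate on $(\partial_\nu u)^2$ by a purely geometric fitting argument. If you wish to salvage the Hadamard route you would need to supply a rigorous proof of the sign of the $V_2V_3$ integral, for instance by a worked-out polarization argument for the eigenfunction itself establishing $u(x)\ge u(\mathcal R x)$ on the appropriate half of $T\cap\mathcal R T$ together with a Hopf-lemma comparison of normal derivatives; that is essentially the content that is currently missing.
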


\begin{remark}
What this lemma essentially says is that we can continuously deform a triangle into an isosceles triangle, 
preserving the smallest angle and the area in such a way that the first eigenvalue
will be decreasing. This immediately gives the first inequality in Theorem \ref{lower2}.
\end{remark}

\begin{proof}
This is perhaps the most complicated application of any symmetrization techniques. 
We have to apply a suitable sequence of polarizations to obtain the result.
First, let $T_\varepsilon'$ be a triangle similar to $T'$ but with area $A+\varepsilon$.
We will symmetrize $T$ into a set contained in $T_\varepsilon'$. Then
\begin{gather}
  \lambda_1|_T\geq \lambda_1|_{T_\varepsilon'}.
\end{gather}
But when $\varepsilon\to0$, the eigenvalue of $T_\varepsilon'$ converges to the
eigenvalue of $T'$ and this ends the proof. Indeed, the triangles are similar hence we get the convergence due to scaling property of the eigenvalues.

Ideally, we would like to just define a sequence polarizations that folds $T$ inside $T'_\varepsilon$. Unfortunately, it is virtually impossible to choose a correct line even for the first polarization. Each polarization must be with respect to a line cutting the longest and the shortest sides, but its exact position is not clear. The first line should be close to the vertex and the following lines should move away from the vertex. To give a precise position of each line we need to consider a temporary reversed sequence of transformations (very similar to the sequence of polarizations performed in the proof of \ref{freitas}). The first line in this reversed sequence (or the last line for polarization) is easy to define. Having this line we can get another, and so on. The precise construction is split into 4 steps.

\subsubsection*{Step 1: Definition of elementary transformation.}
$\;$

We start with two triangles $T=ABC$ and $T'_\varepsilon=DBE$ such that $E$ is on the interval $BC$ and $A$ is on the interval $DB$. We also assume that the angle with vertex $B$ is the smallest in the triangle $T$ and that the area of $T'_\varepsilon$ equals the area of $T$ plus $\varepsilon$. The triangles are shown on Figure \ref{figl2a}. Let $l_1$ be the bisector of the angle between the intervals $DE$ and $AC$. If we reflect $T$ with respect to this bisector we obtain another triangle $A'B'C'$ shown on Figure \ref{figl2a2}. Let $G$ be the intersection of $C'B'$ and $AC$.

\begin{figure}[htb]
  \begin{center}
    \subfloat[$T$ and $T_\varepsilon'$]{
\beginpgfgraphicnamed{triangles_pic8}
\begin{tikzpicture}[scale=0.04]
\clip (210.94,0) rectangle (351.57,228.52);
\fill[fill=black!15] (279.188,10.2571) -- (317.958,181.814) -- (246.926,146.782);
\draw (0,122.051) -- (527.36,200.037)
(279.188,10.2571) -- (314.978,168.63)
(244.235,158.169) -- (279.188,10.2571);
\fill (244.235,158.169) circle (10pt)
(246.926,146.782) circle (10pt)
(279.188,10.2571) circle (10pt)
(314.978,168.63) circle (10pt)
(317.958,181.814) circle (10pt);
\draw (244.235,158.169) node[above left] {\small $D$}
(246.926,146.782) node [below left] {\small $A$}
(279.188,10.2571) node [below] {\small $B$}
(314.978,168.63) node [below right] {\small $E$}
(317.958,181.814) node [right] {\small $C$};
    \end{tikzpicture}
\endpgfgraphicnamed

  \label{figl2a}
}
\subfloat[the elementary transformation]{
\beginpgfgraphicnamed{triangles_pic9}
\begin{tikzpicture}[scale=0.04]
\clip (210.94,0) rectangle (351.57,228.52);
\fill[black!30] (369.81,38.5379) -- (240.348,157.594) -- (318.697,169.18);
\fill[black!15] (279.188,10.2571) -- (317.958,181.814) -- (246.926,146.782);
\draw (0,122.051) -- (527.36,200.037);
\draw (281.054,163.614) -- (233.118,317.216) node[near start,left] {\small $l_1$};
\draw (279.188,10.2571) -- (314.978,168.63)
(244.235,158.169) -- (279.188,10.2571);
\fill (244.235,158.169) circle (20pt)
(240.348,157.594) circle (20pt)
(246.926,146.782) circle (20pt)
(279.188,10.2571) circle (20pt)
(314.978,168.63) circle (20pt)
(317.958,181.814) circle (20pt)
(250.297,148.445) circle (20pt)
(281.054,163.614) circle (20pt);
\draw (244.235,158.169) node[above right] {\small $D$}
(240.348,157.594) node [above left=-2pt] {\small $C'$}
(246.926,146.782) node [below left] {\small $A$}
(279.188,10.2571) node [below] {\small $B$}
(314.978,168.63) node [below right] {\small $E$}
(317.958,181.814) node [right] {\small $C$}
(250.297,148.445) node [right] {\small $G$}
(281.054,163.614) node [above right] {\small $F$};
\end{tikzpicture}
\endpgfgraphicnamed

  \label{figl2a2}
}

\end{center}
  \caption{Triangles $T=ABC$ and $T_\varepsilon'=DBE$.}
\end{figure}
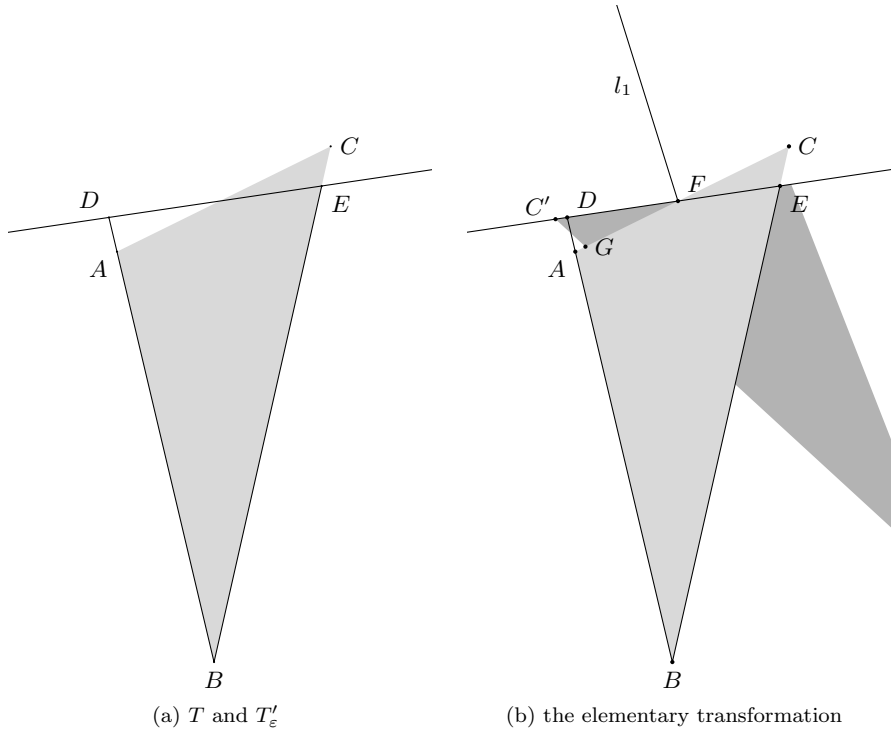

We define the elementary transformation of $T$ to be the polygon $ABEC'G$. This construction is valid due to two conditions. The first, the area of $DBE$ is bigger than the area of $ABC$. The second, the angle $\angle ACB$ is smaller than the angle $\angle EDB$. Those two conditions guarantee that the bisector cuts the sides $AC$ and $BC$ and not the side $AB$ and validates the construction of the elementary transformation.

The same transformation can be applied to more general domains. For example any domain contained in an infinite cone $ACB$ with vertex at $C$. We will need this observation later on.

Notice also that the elementary transformation is equivalent to polarization with respect to the bisector for any domain containing $ABC$ as long as: the intersection of this domain with the bisector is the same as the intersection of $ABC$ with the bisector, the new part (triangle $C'GF$ in the case above) does not intersect the domain. In particular we could take a domain consisting of $ABC$ and any subset of a half-plane disjoint with $ABC$ and with boundary containing $AB$ unless this domain intersects the bisector or $C'GF$. We also do not need $C'GF$ to be a triangle, it is enough that this set does not intersect the domain. 

\subsubsection*{Step 2: Sequence of elementary transformations.}
$\;$

Now we want to describe a sequence of elementary transformations that changes $T=ABC$ into a subset of $T'_\varepsilon=DBE$. 

The first transformation is described in \textit{Step 1}. It is clear that the area of $ADF$ is bigger than the area of $C'GF$. We have
\begin{gather}
  \angle GC'F=\angle ACB<\angle ACB+\angle ABC=\angle DAF.
\end{gather}
Therefore the second elementary transformation can be applied to triangles $C'GF$ and $ADF$ (see Figure \ref{figl2b}). Here we completely disregard the presence of the quadrilateral $AFEB$.

This elementary transformation introduces a new triangular piece that may intersect the triangle $ABC$ near vertex $A$ (see Figure \ref{figl2b2}). The intersection means that the elementary transformation is not equivalent to the polarization of $ABEC'G$ (we did not care about the quadrilateral $AFEB$ while making a transformation). This intersection will not occur if the triangle $ADF$ is large enough to fit the triangle $C'GF$ inside. It would follow that the second elementary transformation is also equivalent to polarization. In such a case the construction of the sequence of transformations would be finished. 

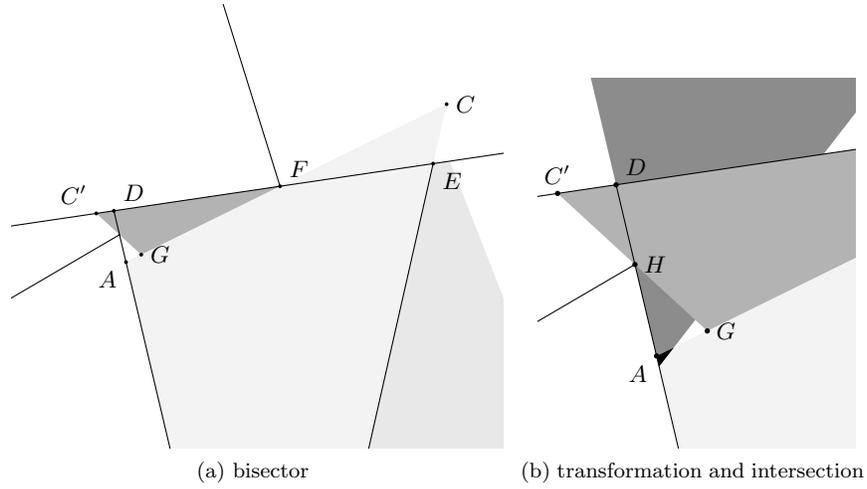
\begin{figure}[htbp]
  \begin{center}
    \subfloat[bisector]{
\beginpgfgraphicnamed{triangles_pic10}
\begin{tikzpicture}[scale=0.06]
\clip (221.49,105.47) rectangle (330.48,203.91);
\fill[black!30] (369.81,38.5379) -- (240.348,157.594) -- (318.697,169.18);
\fill[black!15] (279.188,10.2571) -- (317.958,181.814) -- (246.926,146.782);
\fill[white,fill opacity=0.7]  (246.926,146.782) -- (317.958,181.814) -- (369.81,38.5379) -- (279.188,10.2571);
\draw (0,122.051) -- (527.36,200.037)
(281.054,163.614) -- (233.118,317.216)
(279.188,10.2571) -- (314.978,168.63)
(245.488,152.867) -- (0,9.14765)
(244.235,158.169) -- (279.188,10.2571);
\fill (244.235,158.169) circle (12pt)
(246.926,146.782) circle (12pt)
(279.188,10.2571) circle (12pt)
(314.978,168.63) circle (12pt)
(317.958,181.814) circle (12pt)
(281.054,163.614) circle (12pt)
(240.348,157.594) circle (12pt)
(250.297,148.445) circle (12pt);
\draw (244.235,158.169) node[above right] {\small $D$}
(246.926,146.782) node [below left] {\small $A$}
(279.188,10.2571) node [below] {\small $B$}
(314.978,168.63) node [below right] {\small $E$}
(317.958,181.814) node [right] {\small $C$}
(281.054,163.614) node [above right] {\small $F$}
(240.348,157.594) node [above left] {\small $C'$}
(250.297,148.445) node [right] {\small $G$};
    \end{tikzpicture}
\endpgfgraphicnamed

  \label{figl2b}
}
\subfloat[transformation and intersection]{
\beginpgfgraphicnamed{triangles_pic11}
\begin{tikzpicture}[scale=0.2]
\clip (239.07,140.63) rectangle (260.16,165.24);
\fill[black!45] (206.645,317.241) -- (247.094,146.072) -- (295.547,208.722);
\fill[black!30] (369.81,38.5379) -- (240.348,157.594) -- (318.697,169.18);
\fill[black!15] (279.188,10.2571) -- (317.958,181.814) -- (246.926,146.782);
\fill[white,fill opacity=0.7]  (246.926,146.782) -- (317.958,181.814) -- (369.81,38.5379) -- (279.188,10.2571);
 \fill[black] (246.926,146.782) -- (248.086,147.354) -- (247.094,146.072);
\draw (0,122.051) -- (527.36,200.037)
(281.054,163.614) -- (233.118,317.216)
(245.488,152.867) -- (0,9.14765)
(244.235,158.169) -- (279.188,10.2571);
\fill (244.235,158.169) circle (5pt)
(246.926,146.782) circle (5pt)
(279.188,10.2571) circle (5pt)
(314.978,168.63) circle (5pt)
(317.958,181.814) circle (5pt)
(281.054,163.614) circle (5pt)
(240.348,157.594) circle (5pt)
(250.297,148.445) circle (5pt)
(245.488,152.867) circle (5pt);
\draw (244.235,158.169) node[above right] {\small $D$}
(246.926,146.782) node [below left] {\small $A$}
(281.054,163.614) node [above right] {\small $F$}
(240.348,157.594) node [above] {\small $C'$}
(250.297,148.445) node [right] {\small $G$}
(245.488,152.867) node [right] {\small $H$};
\end{tikzpicture}
\endpgfgraphicnamed

  \label{figl2b2}
}
\end{center}
\caption{The second transformation}
\end{figure}

If the intersection occurs, we apply another elementary transformation to the triangle introduced during the previous transformation and to the difference between the triangles considered for the previous transformation (triangle $AGH$). Here we again disregard the presence of other part of the domain. The third elementary transformation is shown on Figure \ref{figl2b3}. To validate this step we need to check that the angle $\angle AGH$ is bigger than the angle $\angle ACB$. Indeed, it is equal to the angle $\angle GC'F$($=\angle ACB$) plus the angle $\angle C'FG$.

\begin{figure}[htb]
  \begin{center}
\beginpgfgraphicnamed{triangles_pic12}
\begin{tikzpicture}[scale=0.2]
\clip (232.04,140.63) rectangle (263.68,165.24);
\fill[black!60] (89.829,221.742) -- (249.54,148.072) -- (178.508,113.04);
\fill[black!45] (206.645,317.241) -- (247.094,146.072) -- (295.547,208.722);
\fill[black!30] (369.81,38.5379) -- (240.348,157.594) -- (318.697,169.18);
\fill[black!15] (279.188,10.2571) -- (317.958,181.814) -- (246.926,146.782);
\fill[white,fill opacity=0.7]  (246.926,146.782) -- (250.297,148.445) -- (240.348,157.594) --  (89.829,221.742) -- (317.958,181.814) -- (369.81,38.5379) -- (279.188,10.2571);
 \fill[black] (246.926,146.782) -- (248.086,147.354) -- (247.094,146.072);
\draw (0,122.051) -- (527.36,200.037)
(281.054,163.614) -- (233.118,317.216)
(248.086,147.354) -- (368.55,0)
(245.488,152.867) -- (0,9.14765)
(244.235,158.169) -- (279.188,10.2571);
\fill (244.235,158.169) circle (5pt)
(246.926,146.782) circle (5pt)
(250.297,148.445) circle (5pt)
(240.348,157.594) circle (5pt)
(245.488,152.867) circle (5pt);
\draw (244.235,158.169) node[above right] {\small $D$}
(246.926,146.782) node [below left] {\small $A$}
(240.348,157.594) node [above] {\small $C'$}
(250.297,148.445) node [right] {\small $G$}
(245.488,152.867) node [right] {\small $H$};
    \end{tikzpicture}
\endpgfgraphicnamed

\end{center}
  \caption{The third transformation}
  \label{figl2b3}
\end{figure}
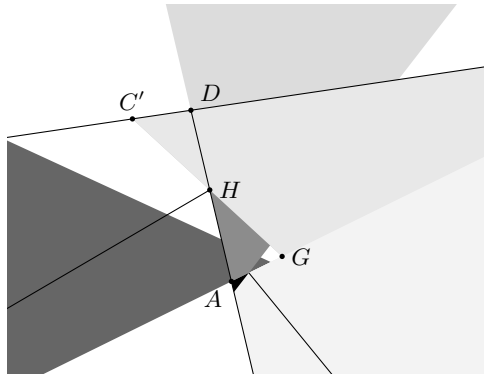

If the newly introduced triangular piece does not intersect the already obtained domain (as on Figure \ref{figl2b3}), the construction in finished. If it does, we apply another elementary transformation (or as many elementary transformations as needed) to fully ``fold'' the triangle $ABC$ inside of the triangle $DBE$. The final subset of $DBE$ that has the same area as $ABC$ will have a spiral-like pattern of triangular pieces introduced by consecutive elementary transformations. First we need to show that this process is finite. Later we need to modify the procedure to get a sequence of valid polarizations. In particular we need to avoid self-intersecting that happens at every stage.

\subsubsection*{Step 3: Finiteness of the construction.}
$\;$

To estimate the area of the subset of $ADF$ that is covered using three consecutive elementary transformations, we shift the triangular pieces obtained each time so that the angle equal to the angle $ACB$ has a vertex at $D$, $A$ and $F$ respectively (after the second, third and fourth elementary transformation). The triangle $A'B'C'$ is shifted so that vertex $C'$ moves to $D$, $G$ moves to $F'$ and the angle $\angle FDF'$ is equal to the angle $\angle ACB$. We do the same with other triangles. See Figure \ref{parts} for the picture of the shifted pieces. This decreases the covered area, but does not change the angles between any of the lines.  

\begin{figure}[htbp]
  \begin{center}
\beginpgfgraphicnamed{triangles_pic13}
\begin{tikzpicture}[scale=0.6]
\filldraw[fill=black!25](22.2855,12.109) -- (13.8859,10.9033) -- (8.24393,16.0965) -- cycle;
\draw (8.28622,10.0995) -- (13.8859,10.9033)
(8.28622,10.0995) -- (8.24393,16.0965)
(8.28622,10.0995) -- (10.3145,14.1906)
(13.8859,10.9033) -- (9.42865,12.4039);
\fill (10.3145,14.1906) circle (1.5pt)
(8.24393,16.0965) circle (1.5pt)
(8.28622,10.0995) circle (1.5pt)
(9.42865,12.4039) circle (1.5pt)
(13.8859,10.9033) circle (1.5pt)
(22.2855,12.109) circle (1.5pt);
\draw[-stealth'] (13.8859,10.9033) ++(161.394:2) arc (161.394:188.169:2);
\draw[-stealth'] (8.28622,10.0995) ++(63.6293:1.5) arc (63.6293:90.404:1.5);
\draw[-stealth'] (8.24393,16.0965) ++(317.372:2) arc (317.372:344.147:2);
\draw (22.2855,12.109) node[right] {\small $F$}
(8.24393,16.0965) node[left] {\small $D$}
(8.28622,10.0995) node[left] {\small $A$}
(9.42865,12.4039) node[left] {\small $A'$}
(10.3145,14.1906) node[left] {\small $D'$}
(13.8859,10.9033) node[below] {\small $F'$};
    \end{tikzpicture}
\endpgfgraphicnamed

  \end{center}
  \caption{Covering using the angle $ACB$ (arrow)}
  \label{parts}
\end{figure}
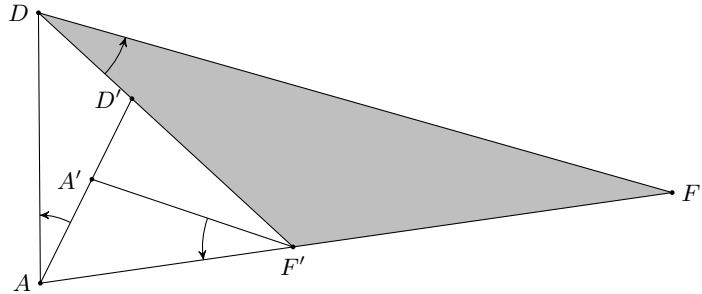

We have 
\begin{gather}
\angle A'F'D'+\angle AF'A'=\angle F'DF+\angle F'FD.
\end{gather}
This implies that the angle $\angle A'F'D'$ equals the angle $\angle AFD$. Similarly we show that other angles of the triangles $AFD$ and $A'F'D'$ are also equal. This means that three elementary transformations reduce the uncovered part to a triangle similar to $AFD$. The size of this triangle is not bigger than the size of $A'D'F'$. Therefore the area of the uncovered part is shrinking at a constant rate. Due to the difference in the area of $T'_\varepsilon$ and $T$ the procedure must end as soon as the area of the uncovered part is smaller than $\varepsilon$.

\subsubsection*{Step 4: Sequence of polarizations.}
$\;$

The presence of intersections in the second and all consecutive elementary transformations stops us from using the sequence of elementary transformations as a sequence of polarizations. 
To fix this problem we consider the same sequence of transformations, but in the reverse order. Each elementary transformation was applied to a triangle that is a part of the reflection of the triangle $ABC$. Therefore we can treat those elementary transformations as a transformations on $ABC$.   

As we remarked in \textit{Step 1} such transformations are equivalent to polarizations. We use the reflection lines used by the elementary transformations to produce polarizations.
Figures \ref{figl2r} and \ref{figl2r2} show the domain after the first two polarizations or the last two elementary transformations (outlined sets). The construction from \textit{Step 2} ensures that the triangular pieces introduced at every step have no intersection with the reflection lines. We also avoid self-intersecting since the triangular pieces are introduced in the reverse order. This means that we have a sequence of valid polarizations.

The last polarization (the first elementary transformation) fits the whole domain inside $T_\varepsilon'=DBE$ (see Figure \ref{figl2r3}). If the sequence is longer than on the example, we proceed in the same manner, building the spiral-like structure starting from the smallest inner triangle.

This proves that for arbitrary $\varepsilon>0$ there exists a finite sequence of 
polarizations, which transforms $T$ into a subset of $T_\varepsilon'$.

This sequence of polarizations can be constructed even for domains consisting of triangle $ABC$ and any set contained in the half-plane with boundary $AB$ and disjoint with $ABC$.

\begin{figure}[htbp]
  \begin{center}
    \subfloat[The first polarization (the third transformation)]{
\beginpgfgraphicnamed{triangles_pic14}
\begin{tikzpicture}[scale=0.26]
\clip (239.07,144.14) rectangle (256.65,165.24);
\fill[black!30] (206.645,317.241) -- (247.094,146.072) -- (295.547,208.722);
\fill[black!40] (89.829,221.742) -- (249.54,148.072) -- (178.508,113.04);
\draw (206.645,317.241) -- (246.926,146.783) -- (249.54,148.072) -- (248.877,148.378) -- (295.547,208.722)
(248.086,147.354) -- (368.55,0);
\fill (247.094,146.072) circle (3pt);
\draw (247.094,146.072) node[below] {\small $C$};
    \end{tikzpicture}
\endpgfgraphicnamed

  \label{figl2r}
}
\hspace{0.2cm}
\subfloat[The second polarization (the second transformation)]{
\beginpgfgraphicnamed{triangles_pic15}
\begin{tikzpicture}[scale=0.26]
\clip (239.07,144.14) rectangle (262.98,165.24);
\fill[black!20] (369.81,38.5379) -- (240.348,157.594) -- (318.697,169.18);
\fill[black!30] (206.645,317.241) -- (246.926,146.783) -- (249.54,148.072) -- (248.877,148.378) -- (295.547,208.722);
\draw (314.978,168.63) -- (244.235,158.169) -- (246.926,146.783) -- (249.54,148.072) -- (248.877,148.378) -- (249.498,149.18) -- (369.81,38.5379)
(245.488,152.867) -- (0,9.14765);
\fill (240.348,157.594) circle (3pt);
\draw (240.348,157.594) node [left] {\small $C$};
\end{tikzpicture}
\endpgfgraphicnamed

  \label{figl2r2}
}
\end{center}
  \caption{Reversed steps}
\end{figure}
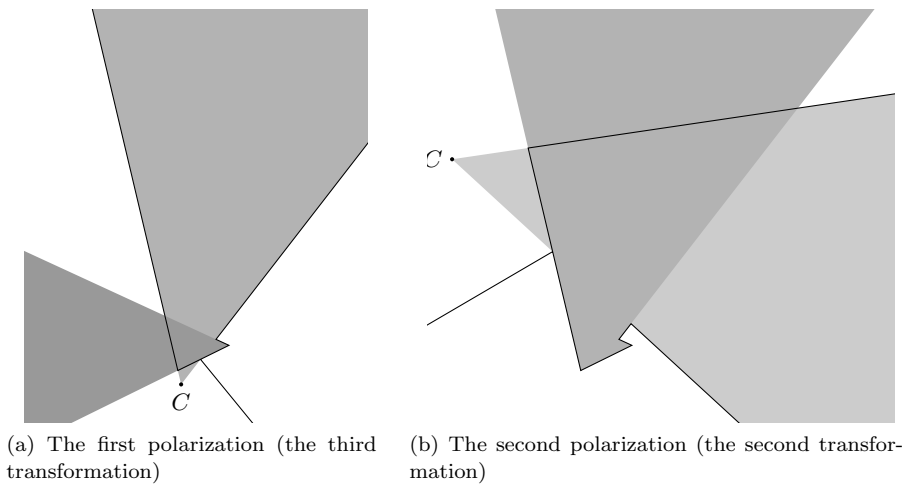

\begin{figure}[htbp]
  \begin{center}
\beginpgfgraphicnamed{triangles_pic16}
\begin{tikzpicture}[scale=0.12]
\clip (239.07,133.60) rectangle (323.45,186.33);
\fill[gray!30](318.697,169.18) -- (244.235,158.169) -- (246.926,146.783) -- (249.54,148.072) -- (248.877,148.378) -- (249.498,149.18) -- (369.81,38.5379);
\fill[gray!15](279.188,10.2571) -- (317.958,181.814) -- (246.926,146.782);
\draw (279.188,10.2571) -- (314.978,168.63) -- (244.235,158.169) -- (246.926,146.783) -- (249.54,148.072) -- (248.877,148.378) -- (249.498,149.18) -- (250.297,148.445) -- (246.926,146.782) -- (279.188,10.2571)
(281.054,163.614) -- (233.118,317.216);
\fill (244.235,158.169) circle (5pt)
(246.926,146.782) circle (5pt)
(279.188,10.2571) circle (5pt)
(314.978,168.63) circle (5pt)
(317.958,181.814) circle (5pt);
\draw (244.235,158.169) node[above left] {\small $D$}
(246.926,146.782) node[below left] {\small $A$}
(314.978,168.63) node[right] {\small $E$}
(317.958,181.814) node[right] {\small $C$};
      
    \end{tikzpicture}
\endpgfgraphicnamed

\end{center}
  \caption{The third polarization (the first transformation)}
  \label{figl2r3}
\end{figure}
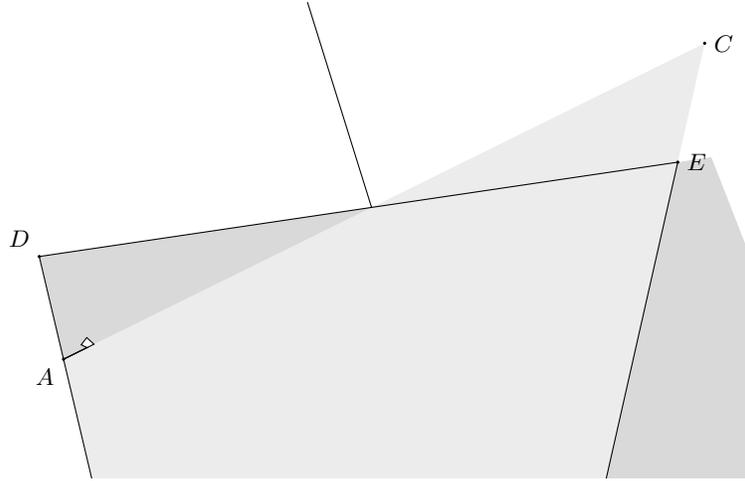

\end{proof}

\begin{remark}\label{anglesym}
  The same proof as for Lemma \ref{symtr} works for any domain contained in an infinite cone $ACB$ and containing $T$.
\end{remark}

\begin{lemma}\label{doublesym}
 Let $\Omega$ be a kite, that is a quadrilateral symmetric with respect to the longer diagonal and with perpendicular diagonals. Assume also that the length of the longer diagonal is smaller than the length of one of the sides. Given the fixed area and the smallest angle, the first eigenvalue is decreasing with the diameter. In particular, an isosceles triangle has a maximal first eigenvalue among kites with fixed area and the smallest angle, the kite consisting of two isosceles triangles has minimal first eigenvalue.
\end{lemma}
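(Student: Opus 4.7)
The strategy is to adapt the polarization scheme of Lemma~\ref{symtr} to the kite, exploiting the kite's reflection symmetry across the longer diagonal. Let $\Omega$ and $\Omega'$ be two admissible kites with the same area $A$ and smallest angle $\gamma$, but with $\Omega$ of strictly larger diameter; the goal is to show $\lambda_1(\Omega) \leq \lambda_1(\Omega')$. Decompose $\Omega = T_1 \cup T_2$ and $\Omega' = T_1' \cup T_2'$ into the upper and lower triangular halves across $AC$, with $T_2$ the reflection of $T_1$ across $AC$. Fix $\varepsilon>0$ and consider a slight enlargement $\Omega'_{\varepsilon}$ of $\Omega'$ by scaling; the plan is to produce a finite sequence of polarizations that folds $\Omega$ into a subset of $\Omega'_{\varepsilon}$.

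The assumption that $|AC|$ is less than one of the sides of the kite translates, via the law of sines applied to $T_1 = ABC$, into the statement that the angle of $T_1$ at $B$ is strictly smaller than at least one of the angles at $A$ or $C$. This positions the geometry of $T_1$ analogously to the triangle $T$ in Lemma~\ref{symtr}, so (the proof of) Lemma~\ref{symtr} yields a finite sequence of polarization axes $\ell_1,\dots,\ell_k$ folding $T_1$ into a subset of $T_{1,\varepsilon}'$.

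Next, I would apply these polarizations to the whole kite $\Omega$, paired with the mirror sequence $\bar\ell_1,\dots,\bar\ell_k$ obtained by reflecting each axis across $AC$; the mirror sequence acts on the lower half $T_2$. The key geometric claim, to be verified by inspecting the construction of Lemma~\ref{symtr}, is that each $\ell_i$ lies in the upper half-plane bounded by $AC$, so that $T_2$ lies entirely on one side of $\ell_i$ and $T_1$ entirely on one side of $\bar\ell_i$. The paired polarizations then act independently on the two halves and jointly fold $\Omega$ into a subset of $\Omega'_{\varepsilon}$. Since each polarization weakly decreases the first Dirichlet eigenvalue, $\lambda_1(\Omega) \geq \lambda_1(\Omega'_{\varepsilon})$, and sending $\varepsilon \to 0$ (via scaling-continuity of eigenvalues) gives $\lambda_1(\Omega) \geq \lambda_1(\Omega')$.

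The main obstacle is the geometric verification that the polarization axes---the bisectors arising in the chain of elementary transformations of Lemma~\ref{symtr}'s proof---can be placed above $AC$, so that the paired $(\ell_i, \bar\ell_i)$ scheme is valid and noninterfering. An alternative route is to generalize Remark~\ref{anglesym}: a direct geometric check using $|AC| \geq |BD|$ (the longer-diagonal assumption) shows that the entire kite is contained in the infinite cone at vertex $B$ bounded by the rays $BA$ and $BC$, so Lemma~\ref{symtr}'s polarization scheme could be applied to the whole kite as a single enlargement of $T_1$ inside this cone, bypassing the need to pair polarizations. The extremal cases asserted in the lemma---the isosceles triangle realizing the maximal first eigenvalue and the kite of two isosceles triangles realizing the minimum---correspond to the two endpoints of the admissible diameter range, which follow from the monotonicity together with the geometric constraints on the kite family.
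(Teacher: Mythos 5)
Your overall strategy -- split the kite across its symmetry axis, run the polarization scheme of Lemma~\ref{symtr} on one half and the mirror images of those polarizations on the other, and control the interaction across the axis -- is the same as the paper's. However, there is a genuine sign/direction problem in your argument, and the key geometric claim you flag is stated more strongly than what is actually true (and than what the paper uses).

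\textbf{The direction of the fold is reversed.} You take $\Omega$ of \emph{larger} diameter, state the goal $\lambda_1(\Omega)\leq\lambda_1(\Omega')$ (correct, since the eigenvalue decreases in the diameter), but then fold $\Omega$ into $\Omega'_\varepsilon$ and conclude $\lambda_1(\Omega)\geq\lambda_1(\Omega')$ -- the opposite of your stated goal. The source of the confusion is that the ``diameter'' in the lemma is the longer diagonal of the kite, while Lemma~\ref{symtr} decreases the \emph{diameter of the triangular half}, and these two quantities move in \emph{opposite} directions: increasing the kite's longer diagonal (with area and the kite's smallest angle fixed) makes each half more nearly isosceles and hence decreases its triangular diameter. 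Consequently Lemma~\ref{symtr} folds the half of the \emph{smaller-diagonal} kite into (a slight enlargement of) the half of the \emph{larger-diagonal} kite, exactly as the paper does by starting from the isosceles triangle (the degenerate, minimal-diagonal kite) and folding outward. You need to fold $\Omega'$ into an enlargement of $\Omega$, not the other way around.

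\textbf{The non-interference claim is too strong.} You propose to verify that ``each $\ell_i$ lies in the upper half-plane bounded by $AC$.'' Taken literally this is impossible (a line cannot lie in an open half-plane), and even charitably interpreted it is more than is true: in the paper's construction the bisectors \emph{do} cross the symmetry line. The correct, weaker statement the paper actually verifies is that the bisectors meet the line containing the symmetry axis only \emph{outside} the kite (``cutting the line $BD$ below the point $B$''), so the mirror half never meets any bisector or any newly created triangular flap. This is precisely the degree of freedom allowed by the observation in Step~1 of Lemma~\ref{symtr}'s proof (a set attached along the side $AB$ is permissible so long as it avoids the bisector and the new piece $C'GF$).

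\textbf{Two smaller points.} First, the law-of-sines remark only gives that $\angle B$ is smaller than \emph{one} of $\angle A,\angle C$; to invoke Lemma~\ref{symtr} you need $\angle B$ to be the \emph{smallest} angle of the half $T_1$, which requires a bit more care (in the paper's setup the half is a right triangle and the angle at the apex is $\gamma/2<\pi/4$, so this is automatic). Second, your alternative route via the cone at $B$ is an appealing idea, and the kite is indeed contained in that cone; but Remark~\ref{anglesym} as stated concerns a cone at the \emph{far} vertex $C$ (the one being moved), not at the fixed small-angle vertex, so invoking it directly requires more justification than ``bypassing the pairing.''
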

\begin{proof} 
  We can split an isosceles triangle $ABC$ into two right triangles and use Lemma \ref{symtr} and the remark above to symmetrize any one of those right triangles (for example $DBC$ on Figure \ref{figl2i}).
  Furthermore, we can perform all but the last polarization on both right triangles (Figure \ref{figl2i2} shows the domain at this stage of construction). Finally we can perform the last polarizations to get the quadrilateral with vertex $E$. The last transformations are indeed polarizations since the bisectors are cutting the line $BD$ below the point $B$ (certainly not between points $D$ and $E$).

  \begin{figure}[htb]
    \begin{center}
      \subfloat[one-sided symmetrization]{
\beginpgfgraphicnamed{triangles_pic17}
\begin{tikzpicture}[scale=0.6]
\clip (0,0) rectangle (10,10);
\filldraw[fill=black!30] (5.22013,0.96888) -- (8.97575,7.94063) -- (5.22013,8.66308) -- (5.22013,8.29295) -- (1.27472,8.29295);
\draw (7.14422,8.29295) -- (7.55805,12.6349)
(8.97575,7.94063) -- (5.22013,8.66308)
(5.22013,0) -- (5.22013,12.6349)
(1.27472,8.29295) -- (5.22013,0.96888)
(9.16554,8.29295) -- (1.27472,8.29295)
(5.22013,0.96888) -- (9.16554,8.29295);
\fill (5.22013,8.29295) circle (1.5pt)
(5.22013,0.96888) circle (1.5pt)
(9.16554,8.29295) circle (1.5pt)
(1.27472,8.29295) circle (1.5pt);
\draw (5.22013,0.96888) node[left] {\small $B$}
(5.22013,8.29295) node[above left] {\small $D$}
(1.27472,8.29295) node[left] {\small $A$}
(9.16554,8.29295) node[right] {\small $C$};
      \end{tikzpicture}
\endpgfgraphicnamed

    \label{figl2i}
}
\subfloat[two-sided symmetrization before the last step]{
\beginpgfgraphicnamed{triangles_pic18}
\begin{tikzpicture}[scale=0.7]
\clip (4,1) rectangle (10,9.3);
\filldraw[fill=black!30,draw=black!50] (0.96888,5.22013) -- (7.94063,8.97575) -- (7.928,9.04) -- (8.29295,9.1) -- (8.29295,1.34022) -- (7.928,1.3732) -- (7.94063,1.46451) -- cycle;
\draw (8.29295,7.14422) -- (12.6349,7.55805)
(8.29295,3.29604) -- (12.6349,2.88221)
(8.66308,5.22013) -- (7.94063,1.46451)
(7.94063,8.97575) -- (8.66308,5.22013);
\draw (4,5.22013) -- (12.6349,5.22013);
\fill (8.29295,9.16554) circle (1.5pt)
(8.29295,1.27472) circle (1.5pt)
(8.66308,5.22013) circle (1.5pt);
\draw (8.29295,1.27472) node [right] {\small $C$}
(8.66308,5.22013) node [below right] {\small $E$}
(8.29295,9.16554) node [right] {\small $A$};
\end{tikzpicture}
\endpgfgraphicnamed

    \label{figl2i2}
}
    \end{center}
    \caption{Isosceles triangle}
  \end{figure}
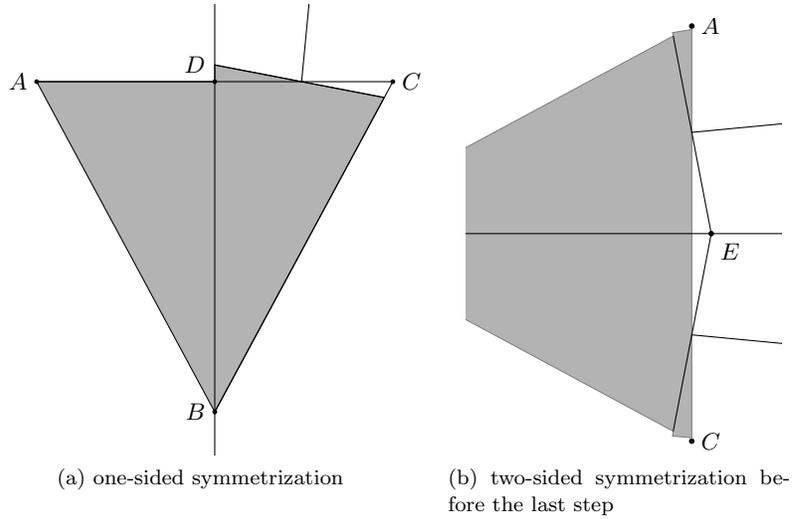

\end{proof}
The proof of the second inequality in Theorem \ref{lower2} requires a repeated use of the above results. 
As before we take a slightly larger sector 
$S(A+\varepsilon,\gamma)$ instead of $S(A,\gamma)$. We need to transform 
the isosceles triangle $I(A,\gamma)$  into a subset of $S(A+\varepsilon,\gamma)$. 
Using Lemma \ref{doublesym} we can symmetrize the triangle $ABC$ into a quadrilateral (see Figure \ref{figl2c}) with longer diagonal equal to the radius of the circular sector $S(A+\varepsilon,\gamma)$.

Now we divide both halves of the quadrilateral into $N$ (to be chosen later) parts by dividing the 
angle with vertex $B$ into equal parts (see Figure \ref{figl2d}).

\begin{figure}[htb]
  \centering
  \subfloat[quadrilateral]{
\beginpgfgraphicnamed{triangles_pic19}
\begin{tikzpicture}[scale=0.5]
\fill[black!30] (2.33976,4.24045) -- (9.31151,7.99606) -- (10.034,4.24045) -- (9.31151,0.48483);
\draw (9.66383,0.295037) -- (2.33976,4.24045)
(9.66383,8.18586) -- (9.66383,0.295037)
(2.33976,4.24045) -- (9.66383,8.18586)
(1,4.24045) -- (11,4.24045);
\draw (10.034,4.24045) arc (0:30:{10.034-2.33976})
(10.034,4.24045) arc (0:-30:{10.034-2.33976});
\fill (9.31151,0.48483) circle (1.5pt)
(10.034,4.24045) circle (1.5pt)
(9.31151,7.99606) circle (1.5pt)
(2.33976,4.24045) circle (1.5pt)
(9.66383,8.18586) circle (1.5pt)
(9.66383,0.295037) circle (1.5pt)
(9.66383,4.24045) circle (1.5pt);
\draw (2.33976,4.24045) node [below left] {\small $B$}
(9.66383,8.18586) node [right] {\small $A$}
(9.66383,0.295037) node [right] {\small $C$}
(9.66383,4.24045) node [below left] {\small $D$};
  \end{tikzpicture}
\endpgfgraphicnamed

  \label{figl2c}
  }
  \subfloat[subdivisions]{
\beginpgfgraphicnamed{triangles_pic20}
\begin{tikzpicture}[scale=0.5]
\clip (1,0.4) rectangle (11,8);
\fill[black!30] (1.94538,4.20101) -- (8.91713,7.95662) -- (9.63958,4.20101) -- (8.91713,0.445392);
\draw (1.94538,4.20101) -- (13.8691,5.68148)
(1,4.20101) -- (13.8691,4.20101)
(1.94538,4.20101) -- (12.6045,8.3425)
(1.94538,4.20101) -- (13.8691,7.20831);
\draw  (9.63958,4.20101) arc (0:30:{9.63958-1.94538})
(9.63958,4.20101) arc (0:-30:{9.63958-1.94538});
\fill (8.91713,0.445392) circle (1.5pt)
(9.40595,6.08265) circle (1.5pt)
(9.11725,6.98758) circle (1.5pt)
(1.94538,4.20101) circle (1.5pt)
(9.63958,4.20101) circle (1.5pt)
(8.91713,7.95662) circle (1.5pt)
(9.58095,5.14906) circle (1.5pt);
  \end{tikzpicture}
\endpgfgraphicnamed

  \label{figl2d}
  }
  \caption{Symmetrized isosceles triangle}
\end{figure}
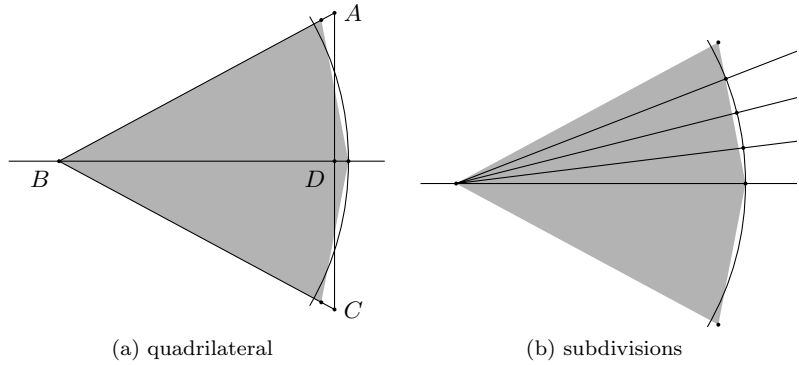

We work on each half separately. Consider a triangle formed by all but the inner most part. 
We use Remark \ref{anglesym} to symmetrize this triangle into a ``more isosceles'' triangle with one vertex on an intersection of a circular part and the innermost line. This makes the part outside of the circular sector smaller. Now we repeat this using decreased number of parts. The picture after two steps is shown on Figure \ref{figl2e}. By choosing a large enough $N$ and performing all steps we can fit the whole half of the quadrilateral inside of the circular sector. 

\begin{figure}[htb]
  \begin{center}
\beginpgfgraphicnamed{triangles_pic21}
\begin{tikzpicture}[scale=2]
\clip (-4.5,0) rectangle (0.5,2.2);
\fill[black!30] (-0.791005,-5.97013) -- (-4.44873,0.866002) -- (-2.66213,1.49309) -- (-2.64447,1.42267) -- (-1.73367,1.66611) -- (-1.71974,1.55324) -- (-0.791005,1.72408) -- (2.95696,1.03466);
\draw (-0.791005,0) -- (-0.791005,2.61468)
(-0.791005,-5.97013) -- (-1.85077,2.61468)
(-0.791005,-5.97013) -- (-2.94333,2.61468)
(3.13707,1.37128) -- (-0.791005,-5.97013)
(-1.73367,1.66611) -- (-1.71974,1.55324)
(-0.791005,-5.97013) -- (-4.10567,2.61468);
\draw  (-0.791005,1.72408) arc (90:120:{1.72408+5.97013})
(-0.791005,1.72408) arc (90:60:{1.72408+5.97013});
\fill (2.95696,1.03466) circle (0.5pt)
(-2.64447,1.42267) circle (0.5pt)
(-2.66213,1.49309) circle (0.5pt)
(-1.71974,1.55324) circle (0.5pt) 
(-3.5624,1.20763) circle (0.5pt)
(-4.44873,0.866002) circle (0.5pt)
(-0.791005,1.72408) circle (0.5pt)
(-1.73367,1.66611) circle (0.5pt);
    \end{tikzpicture}
\endpgfgraphicnamed

\end{center}
  \caption{Intermediate stage of construction}
  \label{figl2e}
\end{figure}
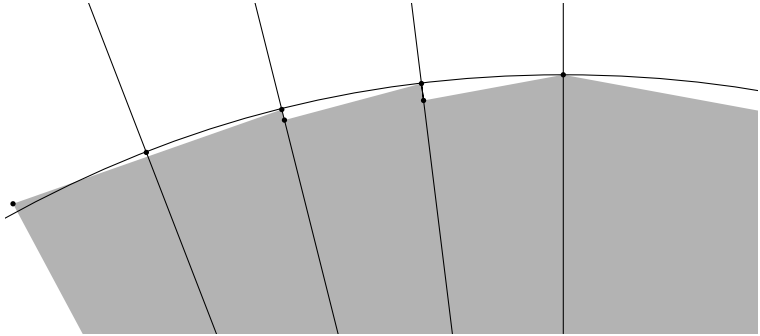

This ends the proof of the second inequality in the Theorem \ref{lower2}. The last thing to prove is the monotonicity property for the isosceles triangles.
Here we need to apply the continuous symmetrization to obtain the result. 
Suppose that we start with the isosceles triangle $I(A,\gamma)$ with the vertex angle smaller than
$\pi/3$. We want to show that if we increase the angle to $\pi/3\geq\gamma'>\gamma$ while the area
remains fixed, then the first eigenvalue decreases. First, we apply the continuous Steiner symmetrization with
respect to the line perpendicular to one of the arms. The length of the base increases, and its maximal length is obtained
when we reach the full Steiner symmetrization. Compare with Figure \ref{sym} where the shortest side of $\Omega^\alpha$ increases with $\alpha$.

If this maximum is smaller than the base of $I(A,\gamma')$, we use the Steiner symmetrization to get
an isosceles triangle with the angle $\gamma''$ between $\gamma$ and $\gamma'$ and the base equal to this maximum. We can repeat the above procedure on the triangle $I(A,\gamma'')$.

On the other hand, if the maximum is bigger than the base of $I(A,\gamma')$, then we can stop the continuous Steiner symmetrization at the time when the length of the
enlarged base of $I(A,\gamma)$ is equal to the base of $I(A,\gamma')$ (on Figure \ref{sym} we choose $\alpha$ such that the shortest side is equal to the base of $I(A,\gamma')$. 
Now, the Steiner symmetrization with respect to the base gives an isosceles triangle with the base equal to the base of $I(A,\gamma')$. This implies that it must be equal to $I(A,\gamma')$.

Suppose that the vertex angle $\gamma>\pi/3$. The same procedure as above shortens the base in this case. The same argument applies, but with maximum replaced with minimum. 
This ends the proof of the Theorem \ref{lower2}.

\section{The proofs of the upper bounds}\label{sup}
In this section we prove Theorem \ref{upper}. The approach we take is based on variational bounds
for the second eigenvalue. This approach is a modification of the similar method
used in \cite{S}. We start with the minimax formula for the second eigenvalue.

\begin{gather}\label{minimax}
  \lambda_2|_D=\inf_
  {f_1,f_2}
  \sup_{\alpha\in\Real}
  \frac{\int_D |\nabla (f_1+\alpha f_2)|^2}{\int_D |f_1+\alpha f_2|^2},
\end{gather}
where $f_1,f_2\in H^1_0(D)$ are linearly independent.
This formula is a special case of the general minimax formula for an arbitrary eigenvalue (see e.g. \cite{Da}).
As in \cite{S}, we will use known eigenfunctions for equilateral or right triangles
to obtain test functions for arbitrary triangles.

Consider the  equilateral triangle $T_e$ with vertices $(0,0)$, $(1,0)$ and $(1/2,\sqrt3/2)$.
The complete set of eigenfunctions is well known. For the exact formulas for these
eigenfunctions we refer the reader to \cites{P,M}. In particular, \cite{M} gives the simple formulas that
we will use in this paper. Let 
\begin{gather}
  z=\frac\pi3(2x-1),\label{zz}\\
  t=\pi\left(1-\frac2{\sqrt3} y\right).\label{tt}
\end{gather}
The first eigenfunction is given by
\begin{gather}
  \varphi_1(x,y)=(\cos(3z)-\cos(t))\sin(t).
\end{gather}

The second eigenvalue has multiplicity two. We will follow 
the notation from \cite{M} to name these eigenfunctions. All eigenfunctions can be divided into two kinds: symmetric and antisymmetric.
The first will be denoted by $S$ followed by two numbers identifying the eigenfunction. The second by $A$ also followed by two numbers. For the details about this notation we
refer the reader to \cite{M}. The two eigenfunctions belonging to the second eigenvalue are
\begin{gather}
  \varphi_{S21}(x)=\cos(4z)\sin(2t)+\cos(5z)\sin(t)-\cos(z)\sin(3t),\\
  \varphi_{A21}(x)=\sin(4z)\sin(2t)+\sin(5z)\sin(t)-\sin(z)\sin(3t).
\end{gather}

Let $T$ be an arbitrary triangle. We can assume that one side of this triangle
is equal to the segment from $(0,0)$ to $(1,0)$, and that the last vertex $(u,v)$ is in the upper halfspace. 
Then, there exists a unique linear transformation $L$ from $T$ onto $T_e$.
As in \cite{S}, we will compose $L$ with the eigenfunctions of $T_e$ to obtain suitable test functions for $T$. 

Using formula (\ref{minimax}) we obtain an upper bound
\begin{gather}\label{genbound}
  \lambda_2|_T\leq \sup_{\alpha\in\Real} 
  \frac{\int_T |\nabla (g_\alpha\circ L)|^2}{\int_T |g_\alpha\circ L|^2},
\end{gather}
where $g_\alpha$ is a linear combination of two known eigenfunctions.

As the first two test functions we can take
\begin{gather}\label{test1}
  g^1_\alpha(x,y)=\varphi_{S21}(x,y)+\alpha\varphi_{S11}(x,y),\\
  g^2_\alpha(x,y)=\varphi_{A21}(x,y)+\alpha\varphi_{S11}(x,y).\label{test2}
\end{gather}

If the triangle $T$ is almost equilateral, we can expect that its eigenvalues
and eigenfunctions are similar to the eigenvalues and eigenfunctions of $T_e$.
Also, the linear transformation $L$ should not perturb the bound (\ref{genbound}) 
significantly. Therefore, we can expect that for almost equilateral triangles
this should be a good upper bound for $\lambda_2|_T$.

Notice that the linear combinations in (\ref{test1}) and (\ref{test2}) consist of two orthogonal 
functions. Hence the second norm of this combination is just the sum of the 
second norms. Therefore 

\begin{gather}
  \sup_{\alpha\in\Real} 
  \frac{\int_T |\nabla (g_\alpha\circ L)|^2}{\int_T |g_\alpha\circ L|^2}
  =
  \frac{a\alpha^2+b\alpha+c}{e\alpha^2+f},
\end{gather}
where $a$,$c$,$e$,$f$ are strictly positive.
This rational function has a limit $a/e$, as $\alpha\to \pm\infty$. By taking its derivative we can also find the critical points
\begin{gather}
  \alpha_1=\frac1{bd}\left(af-ce-\frac12\sqrt{4b^2ef+(2af-2ce)^2}\right),\\
  \alpha_2=\frac1{bd}\left(af-ce+\frac12\sqrt{4b^2ef+(2af-2ce)^2}\right).
\end{gather}
If we evaluate the function at those points and simplify, we get
\begin{gather}
  C_1=\frac1{2ef}\left(af+ce-\frac12\sqrt{4b^2ef+(2af-2ce)^2}\right),\\
  C_2=\frac1{2ef}\left(af+ce+\frac12\sqrt{4b^2ef+(2af-2ce)^2}\right).
\end{gather}
The expression under the root is clearly nonnegative. It is zero
if $b=0$ and $af=ce$. In such case $C_1=C_2=a/e$, hence the maximum
is $a/e$. If the expression under the root is positive, then we have two
distinct critical points, and $C_2>C_1$. This means that this rational function
has an absolute maximum $C_2$ and absolute minimum $C_1$.

This leads to a new bound for the eigenvalue
\begin{gather}\label{upsimple}
  \lambda_2|_T\leq C_2=\frac1{2ef}\left(af+ce+\frac12\sqrt{4b^2ef+(2af-2ce)^2}\right).
\end{gather}

To finish the proof of Theorem \ref{upper} we need to use one of the lower
bounds for the first eigenvalue. Due to the simple form of the bound, we will use
Freitas's result (\ref{freitas}). To prove the first bound in Theorem \ref{upper} we
need to show that
\begin{gather}\label{bbound1}
  ((\ref{upsimple})-(\ref{freitas}))R^2\leq\frac{16\pi^2}{27},
\end{gather}
where we have used the reference number of the equation for the value given by it.
The number on the right side of the inequality is the exact value obtained for the
equilateral triangle.

The second bound will be proved if we can show that
\begin{gather}\label{bbound2}
  \frac{(\ref{upsimple})}{(\ref{tA})}\leq \frac73.
\end{gather}
Notice that this time we use P\'olya's isoperimetric bound (\ref{tA}).

Let us begin with the proof of (\ref{bbound1}). We first use $g^1_\alpha$ as
a test function. The expressions (\ref{upsimple}) and (\ref{freitas}) can be written in terms of
the vertices of the triangle $T$. But we assumed that the vertices are
$(0,0)$, $(1,0)$ and $(u,v)$. We denote the lengths of the sides of the triangle 
by $1$, $M=\sqrt{u^2+v^2}$, $N=\sqrt{(1-u)^2+v^2}$ and we can assume that $N\geq M\geq 1$. Then 
the bound (\ref{bbound1}) (in terms of the lengths of the sides) is equivalent to
\begin{gather}
0\geq -1612800N^2(1+M+N)^2\pi^2+27\Bigl\{-413343N^2V\nonumber\\
+11200(9(M^2-1)^2+2(M^2+1)N^2+20N^4)\pi^2\label{long}\\
+N^2\sqrt{655128046899V^2-74071065600VW
\pi^2+8028160000W^2\pi^4}\Bigr\},\nonumber
\end{gather}
where $V=M^2+N^2-2$ and $W=M^2+N^2+1$.

The expression is quite complicated since the integrals of the function $g^1_\alpha$ are very
cumbersome to calculate. This task could be accomplished by hand since this function is
just a sum of the products of the trigonometric functions. However, symbolic calculations in
Mathematica are used to obtain this expression in a short time. The script performing all the
calculations from this section is included in the last section.

One can check numerically where this inequality is true. 
If we put $U=N-M$ then
the set of all possible triangles can be characterized by $U\in[0,1)$ and 
$M\geq1$. The dark part of Figure \ref{fignum} corresponds to those triangles for which
the inequality is valid.

\begin{figure}[htb]
  \centering
\beginpgfgraphicnamed{triangles_pic22}
\begin{tikzpicture}[scale=7]
\fill[black!50,smooth]  plot coordinates {(.94512,.01923)(.91814,.09053)(.88924,.1599)(.85841,.22734)(.82661,.29382)(.79289,.35837)(.75725,.421)(.71871,.48073)(.67728,.53758)(.63296,.59153)(.58382,.64067)(.52794,.68306)(.46146,.71485)(.37571,.72352)(.30442,.69655)(.25046,.65223)(.20614,.59827)(.16953,.53661)(.13774,.47013)(.11076,.39884)(.08764,.32369)(.06644,.24661)(.04717,.1676)(.03176,.08475)(.01986,.01923)};
\fill[black!20] (.85233,.01923) -- (.85233,.21066) -- (.08408,.21066) -- (.08408,.01923);
  \draw[stealth'-stealth'] (0.01923,0.87) -- (0.01923,0.01923) -- (1.05,0.01923);
  \foreach \x in {1,1.1,1.2,1.3,1.4}{
    \draw ({(\x-1)*2.15+0.01923},0.03) -- ({(\x-1)*2.15+0.01923},0.01) node[below] {\tiny \x};}
  \foreach \x in {0,0.02,0.04,0.06,0.08,0.1,0.12}{
    \draw (0.03,\x*6.3+0.01923) -- (0.01,\x*6.3+0.01923) node[left] {\tiny \x};}
  \draw (1,0.02) node[below] {\tiny $M$};
  \draw (0.02,0.85) node[left] {\tiny $U$};
  \end{tikzpicture}
\endpgfgraphicnamed

  \caption{Bound 1/Case 1: The numerical solution}
  \label{fignum}
\end{figure}
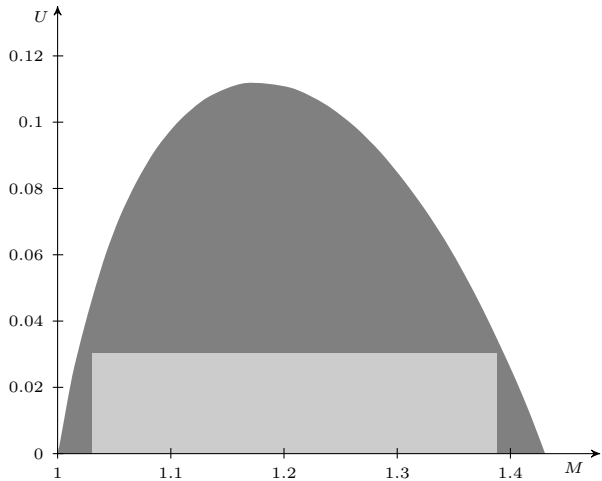

  It is clear that not all triangles can be handled this way. Hence we have to 
use more then one test function, and divide all triangles into subregions of
$U\in[0,1)$ and $M\geq1$. We will define other test functions at the end of the section.

We now prove inequality (\ref{long}) on the gray rectangle shown in Figure \ref{fignum}. 
More precisely we take $U\in[0,0.03]$ and $M\in[1.03,1.39]$. 

The inequality (\ref{long}) can be written as
\begin{gather}
  P(N,M)+Q(N,M)\sqrt{R(N,M)}\leq 0,
\end{gather}
where $P$, $Q$ and $R$ are polynomials in $N$ and $M$.
It will be proved, if we can show that
\begin{gather}
P(N,M)\leq0,\\
Q^2(N,M)R(N,M)-P^2(M,N)\leq0.
\end{gather}
This is a system of polynomial inequalities. Unfortunately the degrees of those 
polynomials are 4 and 8 in each variable. Therefore there is almost no hope to 
solve this system using any conventional method. 

Instead, we developed 
an algorithm for solving such polynomial inequalities on rectangles. 
The next section contains a detailed description of this algorithm and the proof 
of its correctness. It turned out that in our case this method gives the proof
of the inequality for any test function we tried.

To finish the proof of the first part of Theorem \ref{upper} we need to define the other test functions
and rectangles for each corresponding inequality.

We already have two test functions (\ref{test1}) and (\ref{test2}) that came from the equilateral triangle. 
We can also use the first two eigenfunctions of the half of the equilateral triangle. That is, of
the right triangle with angle $\pi/6$. Its eigenfunctions are 
certain antisymmetric eigenfunctions of the equilateral triangle. Hence, we get the third test function
\begin{gather}
g^3_\alpha(x,y)=\varphi_{A31}(x,y)+\alpha\varphi_{A21}(x,y),
\end{gather}
where (using notation (\ref{zz}) and (\ref{tt}))
\begin{gather}
\varphi_{A31}(x,y)=\sin{5z}\sin{3t}-\sin{2z}\sin{4t}-\sin{7z}\sin{t}.
\end{gather}
In this case we also need to get a new linear transformation $L'$ that transforms 
the given triangle into the right triangle.

The last case when all eigenfunctions are known is the right isosceles triangle.
Since this triangle is ``half'' of a square, its eigenfunctions are equal
to eigenfunctions of the square with diagonal nodal lines. We get
\begin{gather}
g^4_\alpha(x,y)=\phi_2(x,y)+\alpha\phi_1(x,y),
\end{gather}
where
\begin{gather}
  \phi_1(x,y)=\sin{2\pi x}\sin{\pi y}+\sin{\pi x}\sin{2\pi y},\\
\phi_2(x,y)=\sin{3\pi x}\sin{\pi y}-\sin{\pi x}\sin{3\pi y}.
\end{gather}

We can also mix the eigenfunctions from the different triangles provided we
use appropriate linear transformation for each of them. In this manner we obtain
the last (fifth) case needed to prove the theorem by taking $g^1_\alpha(x,y)$ and $\frac12\phi_2(x,y)$.

Each of the five cases requires a rectangle on which we can prove the bound.
We take
\begin{enumerate}
\item $U\in[0,0.03]$, $M\in[1.03,1.39]$,
\item $U\in[0,0.2]$, $M\in[1,1.03]$,
\item $U\in[0,1)$, $M\in[1.39,\infty)$,
\item $U\in[0.2,1)$, $M\in[1,1.39]$,
\item $U\in[0.03,0.2]$, $M\in[1.03,1.39]$.
\end{enumerate}
Notice that these rectangles exactly cover the infinite strip $[0,1)\times[1,\infty)$. Since the sides of the triangle are
$N\geq M\geq 1$ and $U=N-M$, the strip includes all possible combinations of lengths of the sides.

The proof in \textit{cases (3)-(5)} is exactly the same as in \textit{case (1)}. Additional step has to
be performed in \textit{case (2)}. Consider a triangle $T'$ similar to $T$ with sides $1$,
$N'=M/N$, $M'=1/N$. We have $1\geq N'\geq M'$. Consider inequality (\ref{bbound1}) 
for $T'$. Note that the diameter of $T'$ used in (\ref{freitas}) is now $1$
($N$ in other cases). Just like before we get an expression similar to (\ref{long}) but
involving $N'$ and $M'$. After a change of variable $M'\to N'-M'+1$ and 
$N'\to 1-U'$ we can apply our algorithm with a rectangle given in \textit{case (2)}.
This proves the bound (\ref{bbound1}) for $T'$, and hence for $T$ since the bound
is invariant under scaling.  

We need to check that the transformation
\begin{gather}
\frac{M}{M+U}=1-U',\\
\frac1{M+U}=2-U'-M',
\end{gather}
changes the rectangle $U\in[0,0.2)$, $M\in[1,1.03)$ into a subset of the same
rectangle in primed variables. From the first equation we get
\begin{gather}
0\leq U'=1-\frac{M}{M+U}\leq 1-\frac1{1.23}<0.19.
\end{gather}
From the second equation
\begin{gather}
1\leq M'=\frac{M-1}{M+U}+1\leq \frac{M-1}M+1=2-\frac1M\leq 2-\frac1{1.03}<1.0292.
\end{gather}

This ends the proof of the first part of Theorem \ref{upper}. The second part requires an additional argument. We still need to use test functions $g^1_\alpha(x,y)$ through $g^5_\alpha(x,y)$ on the following rectangles 
\begin{enumerate}
  \item $U\in[0,0.09]$, $M\in[1,1.37]$;
  \item not needed;
  \item $U\in[0,0.42]$, $M\in[1.37,2.05]$;
  \item $U\in[0.09,0.2]$, $M\in[1,1.37]$;
  \item $U\in[0.2,0.42]$, $M\in[1,1.37]$.
\end{enumerate}
Notice that in each case $U\leq 0.42$. Since the triangles are acute we have $N^2\leq M^2+1$, hence $U\leq \sqrt2-1$. Also, all these rectangles cover only the cases with $M\leq 2.05$.

Therefore we need a circular sector type--bound similar to the one in \cite{S}. We use the sector bound from Theorem \ref{lower2} for the first eigenvalue and the upper bound for the second eigenvalue based on the biggest circular sector contained in the given triangle. Suppose that we have an acute triangle with area $A$ and smallest angle $\gamma$. Let $N\geq M\geq2.05$ be the two longest sides. Since the triangle is acute, the largest sector contained in the triangle has the radius equal to the longest altitude $H$. We get
\begin{gather}
  \left. \frac{\lambda_2}{\lambda_1}\right|_{Triangle}\leq
  \frac{\lambda_2|_{S(\gamma H^2/2,\gamma)}}{\lambda_1|_{S(A,\gamma)}}.
\end{gather}

This bound is certainly good enough, but it is hard to deal with due to the complicated formula for the
radius of the circular sector $S(A,\gamma)$. Therefore we have to rely on the weaker bound
\begin{gather}\label{sectorbound}
  \left.\frac{\lambda_2}{\lambda_1}\right|_{Triangle}\leq
  \frac{\lambda_2|_{S(\gamma H^2/2,\gamma)}}{\lambda_1|_{S(\gamma MN/2,\gamma)}}.
\end{gather}
This bound also follows from Theorem \ref{lower2} by taking the smallest sector containing the isosceles triangle $I(A,\gamma)$ (first inequality in Theorem \ref{lower2}) with the arms of the length $\sqrt{MN}$. 

The eigenvalues of the circular sectors are given in terms of the zeros $j_{v,n}$ of the Bessel function of index $v$. Here $n$ indicates $n$-th smallest zero. We have
\begin{gather}
  \lambda_1|_{S(\gamma R^2/2,\gamma)}=R^{-2} j_{\pi/\gamma,1}^2,\\
  \lambda_2|_{S(\gamma H^2/2,\gamma)}\leq\lambda_{12}(Sector)=R^{-2} j_{\pi/\gamma,2}^2.
\end{gather}
Notice that we only have the inequality for the second eigenvalue due to the presence of another eigenfunction $\lambda_{21}(Sector)$. This eigenvalue may be smaller than $\lambda_{12}(Sector)$.

We can use the bounds for the zeros of the Bessel functions proved in \cite{QW}. That is,
\begin{gather}
  v-\frac{a_k}{\sqrt[3]{2}}\sqrt[3]{v}<j_{v,k}<v-\frac{a_k}{\sqrt[3]{2}}\sqrt[3]{v}+\frac3{20}a_k^2\frac{\sqrt[3]{2}}{\sqrt[3]{v}},
\end{gather}
where $a_k$ are zeros of the Airy function with $a_1\approx-2.3381$ and $a_2\approx-4.0879$.

Using the last two facts we obtain
\begin{gather}
  \begin{split}  
    \left. \frac{\lambda_2}{\lambda_1}\right|_{Triangle}&\leq
    \frac{NM}{H^2}\left(\frac{x^3-\frac{a_2}{\sqrt[3]{2}}x+\frac3{20}a_2^2\frac{\sqrt[3]{2}}{x}}{x^3-\frac{a_1}{\sqrt[3]{2}}{x}}\right)^2
    \\&=
    \frac{NM}{H^2}\left(\frac{1-\frac{a_2}{\sqrt[3]{2}}x^{-2}+\frac3{20}a_2^2\frac{\sqrt[3]{2}}{x^4}}{1-\frac{a_1}{\sqrt[3]{2}}{x^{-2}}}\right)^2,
  \end{split} 
\end{gather}
where $x=(\pi/\gamma)^{1/3}$. Note that for acute triangles we have $N^2\leq M^2+1$. We need to consider two cases based on the length of $H$. First assume that
$H^2\geq M^2-1/16$. This is equivalent to the condition that the altitude $H$ divides the shortest side (with the length $1$) into two parts with one of them not longer than $1/4$. Using the inequality $N^2\leq M^2+1$, we obtain 
\begin{gather}
  \frac{NM}{H^2}\leq  \frac{M\sqrt{M^2+1}}{M^2-\frac1{16}}=\sqrt{1+M^{-2}}\left(1-\frac1{16M^2}\right)^{-1}=:z_1(M).
\end{gather}

If $M^2-1/4\leq H^2\leq M^2-1/16$ then the altitude $H$ divides the shortest side into two parts, one of length $\delta$ satisfying $1/4<\delta<1/2$ and the other of length $1-\delta$. Then
\begin{gather}
  N^2=H^2+(1-\delta)^2=M^2-\delta^2+(1-\delta)^2=M^2+1-2\delta\leq M^2+\frac12.
\end{gather}
This gives
\begin{gather}
  \frac{NM}{H^2}\leq  \frac{M\sqrt{M^2+\frac12}}{M^2-\frac14}=\sqrt{1+\frac1{2M^2}}\left(1-\frac1{4M^2}\right)^{-1}=:z_2(M).
\end{gather}
Let $z(M)=\max\left\{ z_1(M),z_2(M) \right\}$. Note that $z(M)$ is decreasing since both $z_1(M)$ and $z_2(M)$ are decreasing. Making the substitution $y=x^{-2}$ we get
\begin{gather}\label{My}
  \begin{split}  
    \left.\frac{\lambda_2}{\lambda_1}\right|_{Triangle}&\leq
    z(M)\left(\frac{1-\frac{a_2}{\sqrt[3]{2}}y+\frac{3\sqrt[3]{2}}{20}a_2^2y^2}{1-\frac{a_1}{\sqrt[3]{2}}y}\right)^2
    \\&=
    z(M)\left(c_1+c_2y+\frac{c_3}{1+c_4 y}\right)^2,
  \end{split}
\end{gather}
where the constants $c_i$ satisfy
\begin{gather}
  c_1=\frac{(10a_1-3a_2)a_2}{10a_1^2}\approx0.83133>0,\\
  c_2=-\frac{3a_2^2}{10\sqrt[3]{2}a_1}\approx1.70183>0,\\
  c_3=\frac{10a_1^2-10a_1a_2+3a_2^2}{10a_1^2}\approx0.16867>0,\\
  c_4=-\frac{a_1}{\sqrt[3]{2}}\approx1.85575>0.
\end{gather}

We want to show that the right hand side of (\ref{My}) is decreasing with $M$ (note that $y$ also depends on $M$ since it depends on $\gamma$). First we can show that this expression is increasing with $y$ for a fixed $M$. Indeed, the derivative with respect to $y$ is $c_2-\frac{c_3}{(1+c_4 y)^2}$ and it is positive for $y>0$ due to the condition $c_2>c_3$. This means that the expression is also increasing in $\gamma$ since $y=(\pi/\gamma)^{-2/3}$. If we fix $M$, then from all triangles with sides $1$, $M\geq1$, $N\geq M$, the isosceles triangle ($M=N$) has the biggest angle $\gamma$. In this case $\cos \gamma=1-\frac1{2M^2}$. As a result we get an upper bound for the ratio of the first two eigenvalues in terms of $M$. That is, we have
\begin{gather}
  \left.\frac{\lambda_2}{\lambda_1}\right|_{Triangle}\leq
  z(M)\left(c_1+c_2y_M+\frac{c_3}{1+c_4y_M}\right)^2,
\end{gather}
with $y_M=(\arccos(1-(2M^2)^{-1})/\pi)^{2/3}$. But $y_M$ and $z(M)$ are decreasing with $M$, hence the right hand side is decreasing with $M$. To finish the proof we just need to check that this is smaller than $7/3$, as required in (\ref{bbound2}), for $M=2.05$ (we get $\approx2.3285<7/3$). This ends the proof of Theorem \ref{upper}.

\section{An algorithm for polynomial inequalities}\label{salg}
This section gives the algorithm for proving polynomial inequalities in two
variables with arbitrary degrees. The domain we deal with is a rectangle. 
The proof of the correctness is also given.

Suppose that we have an inequality
\begin{gather}\label{poly}
P(x,y)=\sum_{i=0}^n\sum_{j=0}^m c_{i,j}x^i y^j\leq0,
\end{gather}
for $x\in(0,a)$ and $y\in(0,b)$. Any other rectangle can be shifted to the origin,
hence reducing the problem to this case.

The idea behind the algorithm is very simple. For any monomial
\begin{gather}
  c_{i,j}x^iy^j\leq c_{i,j}\min\{a x^{i-1} y^j, b x^i y^{i-1}\},\mbox{ if }c_{i,j}>0,\label{posup}\\
  c_{i,j}x^iy^j\leq c_{i,j}\max\{a^{-1} x^{i+1} y^j, b^{-1} x^i y^{i+1}\},\mbox{ if }c_{i,j}<0\label{negup}.
\end{gather}
We can use this simple observation to reduce the number of positive coefficients in 
$P(x,y)$. Clearly, if we apply any of the above inequalities finite number
of times on any of the monomials in P(x,y), we obtain an upper bound for
$P(x,y)$. If we can reduce the whole polynomial to 0, we proved
inequality (\ref{poly}). 

We need to describe a sequence of reductions, that leads to a constant or to a 
polynomial with only positive coefficients. If we get at least one strictly positive coefficient we cannot say that
inequality is false, since we are using an upper bound for $P(x,y)$.
Thus, the algorithm does not always work although it is possible to generalize it.
In case of the false answer we can divide the rectangle into four identical 
sub--rectangles, and rerun the algorithm on each of them. As long as the inequality
$P(x,y)\leq0$ is strict, this recursive procedure should give the proof. 

It is worth noting that for 8 out of the 9 polynomials we have in the previous section this
method works on the whole rectangle (without sub--dividing). In the case of the third test function
in the second part of Theorem \ref{upper} we need to split the given rectangle into halves.
The gray rectangle on Figure \ref{fignum} in the previous section is almost as big as possible given that the inequality 
is true only for the dark points. Hence the method works very well in this case.
By running the script from the last section one can see that in all other cases
the method is also very efficient.

Now we will define the optimal sequence of reductions. The only way to reduce 
a positive coefficient is by lowering the power of one of the variables (using (\ref{posup})). Similarly,
to reduce a negative coefficient we have to increase one of the powers (using (\ref{negup})). Each time,
two of the coefficients combine giving a new, possibly negative coefficient. Write
\begin{gather}
P(x,y)=\sum_{i=0}^n x^i Q_i(y),
\end{gather}
where 
\begin{gather}
Q_i(y)=\sum_{j=0}^m c_{i,j}y^j. 
\end{gather}

To avoid ambiguity, we start with $Q_n$. Any negative coefficient in $Q_n$ 
can be used only to combine with some positive coefficient with higher power of $y$.
It is impossible to use them to interact with $Q_i$ for $i<n$.
Therefore we inductively (starting from $j=0$) check if $c_{n,j}<0$ and in case this is true
we get an upper bound 
\begin{gather}
c_{n,j}x^n y^j+c_{n,j+1}x^n y^{j+1}\leq (c_{n,j}b^{-1}+c_{n,j+1})x^ny^{j+1},
\end{gather}
and we redefine $c_{n,j+1}=c_{n,j}b^{-1}+c_{n,j+1}$. If the last coefficient turns
out to be negative, we can just change it to $0$.

As a result we changed $Q_n(y)$ into a polynomial $Q_n'(y)$ with nonnegative coefficients
(possibly all equal to $0$). A positive coefficient can only be altered by lowering 
one of the powers. We could lower the power of $y$ but, ultimately, if we want to 
obtain a constant as a final bound we have to also lower the power of $x$ in all
coefficients of $x^n Q_n'(y)$. Hence we get an upper bound
\begin{gather}
x^nQ_n'(y)+x^{n-1}Q_{n-1}(y)\leq (a Q_n'(y)+Q_{n-1}(y))x^{n-1}, 
\end{gather}
and we redefine $Q_{n-1}(y)=aQ_n'(y)+Q_{n-1}(y)$. This approach guarantees that
any negative coefficient of $Q_{n-1}(y)$ can be used to reduce as many positive
coefficients from $Q_n(y)$ as possible.

Now we repeat the whole procedure for $Q_{n-1}(y)$ and for all others by induction.
At the end we get $Q_0'(y)$ which has only nonnegative coefficients. If any of those
coefficients is strictly positive, the algorithm failed.
But $Q_0(y)=0$ means that $P(x,y)\leq0$ on the rectangle $(0,a)\times(0,b)$.

The implementation of this algorithm is a part of the script in the following section. The following simple example
shows how the algorithm works. Let
\begin{gather}
  P(x,y)=x^2y^2-x^2y+2xy^2+x^2+xy+y^2-3x-2y.
\end{gather}
We want to show that $P(x,y)\leq0$ on $(0,1)\times(0,1)$. We have 
\begin{gather*}
  Q_2(y)=1-y+y^2,\\
  Q_1(y)=-3+y+2y^2,\\ 
  Q_0(y)=-2y+y^2.
\end{gather*}

First, we have $Q_2(y)\leq 1+0+(y^2-y^2)$ (by rising the power of $-y$). Now, we redefine $Q_1(y)=(-3+1)+y+2y^2$ and we get the bound
$Q_1(y)\leq 0+(y-2y)+2y^2\leq0+0+(2y^2-y^2)$. Hence, $Q_0(y)=0-2y+(y^2+y^2)$ and as the last step $Q_0(y)\leq 0+0+(2y^2-2y^2)=0$. 
This shows that the inequality is true.

\section{Mathematica package for variational bounds}

The package TrigInt.m contains functions helpful in finding variational upper bounds. All functions have short explanations available (use \lstinline{?Fun} in Mathematica). This package requires Mathematica 6.0. Although all functions are written for triangular domains, one could triangulate any polygon and still use the package.

The function \lstinline{TrigInt} is equivalent to Integrate, but it is much faster for linear combinations of trigonometric functions. It is necessary due to a very slow integration of some trigonometric functions in Mathematica 5.1 and above. 

\lstloadlanguages{Mathematica}
\lstset{
	language=Mathematica,
	showstringspaces=false,
	breaklines,
	basicstyle={\scriptsize},
	numbers=left,
	numberstyle={\tiny},
	morekeywords={Cross,TrigReduce,AffineTransform,Del,Total,FullSimplify,RegionPlot,NotebookDirectory,Alpha},
	deletekeywords={N},
	stringstyle={\textit},
	escapeinside={*@}{@*}
}
\lstinputlisting{TrigInt.m}

\section{Script for triangles}
In this section we give a script that performs all the calculations from Sections \ref{sup} and \ref{salg}. It is important to note that all operations are done symbolically, thus
there are no numerical errors in any calculation. This allows us to use the script as a part of our proofs.

The script has many comments to help the reader follow the code. The output of the script also contains many values representing different stages of calculations. The meaning of each value is explained in the comments.

The script handles both bounds from Theorem \ref{upper}. It can be executed either inside Mathematica GUI or using a command line 
\begin{lstlisting}[numbers=none]
MathKernel -run "bound=_;<<script.m"
\end{lstlisting}
where the value of the variable ``bound'' should be either 1 or 2. The package TrigInt should be in the same folder as the script. The script generates all cases considered in the proof. Each case corresponds
to one of the test functions. For the second bound, the second test function is not needed, hence
we used it to split the rectangle for the third test function. This is the only case when the algorithm fails to
give a proof for a whole rectangle. 

Note that long lines are split into multiple lines and continuation is indented.

\lstinputlisting{triangles.m}

\begin{bibdiv}
\begin{biblist}
\bib{AB}{article}{
   author={Ashbaugh, Mark S.},
   author={Benguria, Rafael D.},
   title={Proof of the Payne-P\'olya-Weinberger conjecture},
   journal={Bull. Amer. Math. Soc. (N.S.)},
   volume={25},
   date={1991},
   number={1},
   pages={19--29},
   issn={0273-0979},
   review={\MR{1085824 (91m:35173)}},
}
\bib{AF}{article}{
   author={Antunes, Pedro},
   author={Freitas, Pedro},
   title={New bounds for the principal Dirichlet eigenvalue of planar
   regions},
   journal={Experiment. Math.},
   volume={15},
   date={2006},
   number={3},
   pages={333--342},
   issn={1058-6458},
   review={\MR{2264470 (2007e:35039)}},
}
\bib{bame}{article}{
   author={Ba{\~n}uelos, Rodrigo},
   author={M{\'e}ndez-Hern{\'a}ndez, Pedro J.},
   title={Sharp inequalities for heat kernels of Schr\"odinger operators and
   applications to spectral gaps},
   journal={J. Funct. Anal.},
   volume={176},
   date={2000},
   number={2},
   pages={368--399},
   issn={0022-1236},
   review={\MR{1784420 (2001f:35096)}},
}
\bib{B1}{article}{
   author={Brock, Friedemann},
   title={Continuous Steiner-symmetrization},
   journal={Math. Nachr.},
   volume={172},
   date={1995},
   pages={25--48},
   issn={0025-584X},
   review={\MR{1330619 (96c:49004)}},
}
\bib{B2}{article}{
   author={Brock, Friedemann},
   title={Continuous rearrangement and symmetry of solutions of elliptic
   problems},
   journal={Proc. Indian Acad. Sci. Math. Sci.},
   volume={110},
   date={2000},
   number={2},
   pages={157--204},
   issn={0253-4142},
   review={\MR{1758811 (2001i:35016)}},
}
\bib{BS}{article}{
   author={Brock, Friedemann},
   author={Solynin, Alexander Yu.},
   title={An approach to symmetrization via polarization},
   journal={Trans. Amer. Math. Soc.},
   volume={352},
   date={2000},
   number={4},
   pages={1759--1796},
   issn={0002-9947},
   review={\MR{1695019 (2001a:26014)}},
}
\bib{Da}{book}{
   author={Davies, E. B.},
   title={Heat kernels and spectral theory},
   series={Cambridge Tracts in Mathematics},
   volume={92},
   publisher={Cambridge University Press},
   place={Cambridge},
   date={1990},
   pages={x+197},
   isbn={0-521-40997-7},
   review={\MR{1103113 (92a:35035)}},
}
\bib{Dr}{article}{
   author={Draghici, Cristina},
   title={Rearrangement inequalities with application to ratios of heat
   kernels},
   journal={Potential Anal.},
   volume={22},
   date={2005},
   number={4},
   pages={351--374},
   issn={0926-2601},
   review={\MR{2135264 (2006h:28006)}},
}
\bib{Du}{article}{
   author={Dubinin, V. N.},
   title={Capacities and geometric transformations of subsets in $n$-space},
   journal={Geom. Funct. Anal.},
   volume={3},
   date={1993},
   number={4},
   pages={342--369},
   issn={1016-443X},
   review={\MR{1223435 (94f:31008)}},
}
\bib{F}{article}{
   author={Freitas, Pedro},
   title={Upper and lower bounds for the first Dirichlet eigenvalue of a
   triangle},
   journal={Proc. Amer. Math. Soc.},
   volume={134},
   date={2006},
   number={7},
   pages={2083--2089 (electronic)},
   issn={0002-9939},
   review={\MR{2215778 (2006k:35042)}},
}
\bib{F2}{article}{
   author={Freitas, Pedro},
   title={Precise bounds and asymptotics for the first Dirichlet eigenvalue
   of triangles and rhombi},
   journal={J. Funct. Anal.},
   volume={251},
   date={2007},
   number={1},
   pages={376--398},
   issn={0022-1236},
   review={\MR{2353712}},
}
\bib{H}{book}{
   author={Henrot, Antoine},
   title={Extremum problems for eigenvalues of elliptic operators},
   series={Frontiers in Mathematics},
   publisher={Birkh\"auser Verlag},
   place={Basel},
   date={2006},
   pages={x+202},
   isbn={978-3-7643-7705-2},
   isbn={3-7643-7705-4},
   review={\MR{2251558 (2007h:35242)}},
}
\bib{M}{article}{
   author={McCartin, Brian J.},
   title={Eigenstructure of the equilateral triangle. I. The Dirichlet
   problem},
   journal={SIAM Rev.},
   volume={45},
   date={2003},
   number={2},
   pages={267--287 (electronic)},
   issn={0036-1445},
   review={\MR{2010379 (2004j:35064)}},
}
\bib{P}{article}{
   author={Pinsky, Mark A.},
   title={The eigenvalues of an equilateral triangle},
   journal={SIAM J. Math. Anal.},
   volume={11},
   date={1980},
   number={5},
   pages={819--827},
   issn={0036-1410},
   review={\MR{586910 (82d:35077)}},
}
\bib{PS}{book}{
   author={P{\'o}lya, G.},
   author={Szeg{\"o}, G.},
   title={Isoperimetric Inequalities in Mathematical Physics},
   series={Annals of Mathematics Studies, no. 27},
   publisher={Princeton University Press},
   place={Princeton, N. J.},
   date={1951},
   pages={xvi+279},
   review={\MR{0043486 (13,270d)}},
}
\bib{Pr}{article}{
   author={Protter, M. H.},
   title={A lower bound for the fundamental frequency of a convex region},
   journal={Proc. Amer. Math. Soc.},
   volume={81},
   date={1981},
   number={1},
   pages={65--70},
   issn={0002-9939},
   review={\MR{589137 (82b:35113)}},
}
\bib{QW}{article}{
   author={Qu, C. K.},
   author={Wong, R.},
   title={``Best possible'' upper and lower bounds for the zeros of the
   Bessel function $J\sb \nu(x)$},
   journal={Trans. Amer. Math. Soc.},
   volume={351},
   date={1999},
   number={7},
   pages={2833--2859},
   issn={0002-9947},
   review={\MR{1466955 (99j:33006)}},
}
\bib{S}{article}{
   author={Siudeja, Bart{\l}omiej},
   title={Sharp bounds for eigenvalues of triangles},
   journal={Michigan Math. J.},
   volume={55},
   date={2007},
   number={2},
   pages={243--254},
   issn={0026-2285},
   review={\MR{2369934}},
}
\bib{smits}{article}{
   author={Smits, Robert G.},
   title={Spectral gaps and rates to equilibrium for diffusions in convex
   domains},
   journal={Michigan Math. J.},
   volume={43},
   date={1996},
   number={1},
   pages={141--157},
   issn={0026-2285},
   review={\MR{1381604 (97d:35037)}},
}
\bib{S1}{article}{
   author={Solynin, A. Yu.},
   title={Continuous symmetrization of sets},
   language={Russian},
   journal={Zap. Nauchn. Sem. Leningrad. Otdel. Mat. Inst. Steklov.
   (LOMI)},
   volume={185},
   date={1990},
   number={Anal. Teor. Chisel i Teor. Funktsii. 10},
   pages={125--139, 186},
   issn={0373-2703},
   translation={
      journal={J. Soviet Math.},
      volume={59},
      date={1992},
      number={6},
      pages={1214--1221},
      issn={0090-4104},
   },
   review={\MR{1097593 (92k:28012)}},
}
\bib{S2}{article}{
   author={Solynin, A. Yu.},
   title={Isoperimetric inequalities for polygons and dissymetrization},
   language={Russian, with Russian summary},
   journal={Algebra i Analiz},
   volume={4},
   date={1992},
   number={2},
   pages={210--234},
   issn={0234-0852},
   translation={
      journal={St. Petersburg Math. J.},
      volume={4},
      date={1993},
      number={2},
      pages={377--396},
      issn={1061-0022},
   },
   review={\MR{1182401 (93i:52014)}},
}
\end{biblist}
\end{bibdiv}

\end{document}